\newtheorem{theorem}{Theorem}[section]
\newtheorem{lemma}[theorem]{Lemma}
\newtheorem{proposition}[theorem]{Proposition}
\newcommand{\R}{\mathbb{R}}
\newcommand{\f}{\frac}
\newcommand{\beq}{\begin{equation}}
\newcommand{\eeq}{\end{equation}}
\newcommand{\beqq}{\begin{equation*}}
\newcommand{\eeqq}{\end{equation*}}
\theoremstyle{definition}
\theoremstyle{remark}
\newtheorem{remark}[theorem]{Remark}
\numberwithin{equation}{section}
\DeclareMathOperator{\re}{Re}
\DeclareMathOperator{\im}{Im}
\def\T{{\Bbb{T}}}
\DeclareDocumentCommand{\abs}{s m}{
  \operatorname{}
  \IfBooleanTF{#1}{#2}{\left|#2\right|}}
\DeclareDocumentCommand{\norm}{s m}{
  \operatorname{}
  \IfBooleanTF{#1}{#2} {\left\| #2\right\|}}
\DeclareDocumentCommand{\inner}{s m}{
  \operatorname{}
  \IfBooleanTF{#1}{#2}{\left \langle#2\right \rangle}}
\DeclareDocumentCommand{\parenthese}{s m}{
  \operatorname{}
  \IfBooleanTF{#1}{#2}{\left(#2\right)}}
\DeclareDocumentCommand{\square}{s m}{
  \operatorname{}
  \IfBooleanTF{#1} {#2}{\left[#2\right]}}
\DeclareDocumentCommand{\bracket}{s m}{
  \operatorname{}
  \IfBooleanTF{#1}{#2}{\left\{#2\right\}}}
\numberwithin{equation}{section}
\begin{document}

\address{Xueying  Yu
\newline \indent Department of Mathematics, University of Washington\indent 
\newline \indent  C138 Padelford Hall Box 354350, Seattle, WA 98195,\indent }
\email{xueyingy@uw.edu}

\address{Haitian Yue
\newline \indent Institute of Mathematical Sciences, ShanghaiTech University\newline\indent
Pudong, Shanghai, China.}
\email{yuehaitian@shanghaitech.edu.cn}

\address{Zehua Zhao
\newline \indent Department of Mathematics and Statistics, Beijing Institute of Technology,
\newline \indent MIIT Key Laboratory of Mathematical Theory and Computation in Information Security,
\newline \indent  Beijing, China. \indent}
\email{zzh@bit.edu.cn}

\title[Fourth-order Schr\"odingers on waveguide manifolds]{Global Well-posedness and scattering for fourth-order Schr\"odinger equations on waveguide manifolds}
\author{Xueying Yu, Haitian Yue and Zehua Zhao}
\maketitle

\begin{abstract}
In this paper,  we study the well-posedness theory and the scattering asymptotics for fourth-order Schr\"odinger equations (4NLS) on waveguide manifolds (semiperiodic spaces) $\mathbb{R}^d\times \mathbb{T}^n$, $d \geq 5$, $n=1,2,3$. The tori component $\mathbb{T}^n$ can be generalized to $n$-dimensional compact manifolds $\mathcal{M}^n$.  First,  we modify Strichartz estimates for 4NLS on waveguide manifolds, with which we establish the well-posedness theory in proper function spaces via the standard contraction mapping method.  Moreover, we prove the scattering asymptotics based on an interaction Morawetz-type estimate established for 4NLS on waveguides. At last, we discuss the higher dimensional analogue, the focusing scenario and give some further remarks on this research line. This result can be regarded as the waveguide analogue of Pausader \cite{Pau2,Pau1,Pau3} and the 4NLS analogue of Tzvetkov-Visciglia \cite{TV2}.

\bigskip

\noindent \textbf{Keywords}: Strichartz estimate, fourth-order Schr\"odinger equation, waveguide manifold, well-posedness, scattering, interaction Morawetz estimate.
\bigskip

\noindent \textbf{Mathematics Subject Classification (2020)} Primary: 35Q55; Secondary: 35R01, 37K06, 37L50.
\end{abstract}

\setcounter{tocdepth}{1}
\tableofcontents

\parindent = 10pt     
\parskip = 8pt

\section{Introduction}\label{sec Intro}
\subsection{Statement of main results}
In this paper, we study the defocuing, fourth-order Schr\"odinger equations (4NLS) on waveguide manifolds $\mathbb{R}^d\times \mathbb{T}$ in the energy space  $H^2(\mathbb{R}^d\times \mathbb{T})$,
\begin{align}\label{maineq}
\begin{cases}
 (i\partial_t+\Delta_{x,\alpha}^{2})u+|u|^pu=0,\\
  u(0)= u_0(x,\alpha)\in H^2(\mathbb{R}^d\times \mathbb{T}),
\end{cases}
\end{align}
with $(t,x,\alpha)\in \mathbb{R}_t \times \mathbb{R}^d \times \mathbb{T}$, where $d\geq 5$ and $\frac{8}{d}<p<\frac{8}{d-3}$. 

Here the space $\mathbb{R}^d \times \mathbb{T}$ is a special case of the product spaces $\mathbb{R}^{d} \times \mathbb{T}^{n}$, which is known as `semiperiodic spaces' as well as `waveguide manifolds' or `waveguides', where $\mathbb{T}^{n}$ is a (rational or irrational) $n$-dimensional torus. Moreover, $\Delta_{x,\alpha}=\Delta_{x}+\Delta_{\alpha}$ with $\Delta_{\alpha}$ the Laplace–Beltrami operator on $\mathbb{T}$ and $\Delta_{x}=\sum_{i=1}^{d}\partial^2_{x_i}$ is the Laplace operator associated to the flat metric on $\mathbb{R}^d$.
\begin{remark}
We note that the exponent $p$ is in the subcritical range for technical reasons. The left endpoint indicates the mass-critical setting if we ignore the torus-direction; while the right endpoint indicates the energy-critical scenario if we treat the torus-direction as additional Euclidean-direction. So essentially, the problem is both energy-subcritical and mass-supercritical. 
\end{remark}
\begin{remark}
As mentioned in the abstract, we also cover the higher dimensional case ($\mathbb{R}^d\times \mathbb{T}^n$, $d \geq 5$, $n=1,2,3$). (See Theorem \ref{main2} and Section \ref{sec Discussion}). The proofs can be generalized in a natural way so we mainly consider \eqref{maineq} as a model case.
\end{remark}
\begin{remark}
The torus component $\mathbb{T}$ can be generalized to $\mathcal{M}$, a compact Riemannian manifold  without boundary of dimension one  (in other words, we consider more general manifolds $\mathbb{R}^d\times \mathcal{M}$ instead of $\mathbb{R}^d\times \mathbb{T}$ in \eqref{maineq}) and the following discussions and results will still hold.
\end{remark}
For 4NLS \eqref{maineq}, like the classical NLS, the following quantities are conserved:
\begin{align*}
\text{Mass: }    &\quad
{M}(u(t))  = \int_{\mathbb{R}^d\times \mathbb{T}} |u(t,x,\alpha)|^2\, d x d\alpha,\\
\text{   Energy:  }    &  \quad
{E}(u(t))  = \int_{\mathbb{R}^d\times \mathbb{T}} \frac12 |\Delta_{x,\alpha} u(t,x,\alpha)|^2  + \frac{1}{p+2} |u(t,x,\alpha)|^{p+2} \, dx d\alpha.
\end{align*}
We intend to prove the global well-posedness and scattering for \eqref{maineq}. One of the main results is as follows.

\begin{theorem}\label{main}
The initial value problem \eqref{maineq} has a unique local solution $u(t,x,\alpha)\in \mathcal{C}((-T,T);H_{x,\alpha}^{2} (\mathbb{R}^d\times \mathbb{T}))$ where $T=T(\|u_0\|_{H_{x,\alpha}^{2}}(\mathbb{R}^d\times \mathbb{T}))>0$; moreover, the solution $u(t, x, \alpha)$ can be extended globally in time by iteration. Furthermore it scatters in the energy space, that is, there exist
$f^{\pm} \in H^2_{x,\alpha}(\mathbb{R}^d\times \mathbb{T})$ such that
\begin{equation*}
    \lim_{t \rightarrow \pm \infty} \|u(t,x,\alpha)-e^{it\Delta_{x,\alpha}^2}f^{\pm}\|_{H^2_{x,\alpha} (\mathbb{R}^d\times \mathbb{T})} =0.
\end{equation*}
\end{theorem}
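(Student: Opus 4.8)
The plan is to build the proof from three ingredients, each adapted to the fourth-order operator and to the mixed geometry of $\mathbb{R}^d\times\mathbb{T}$: a family of Strichartz estimates for the biharmonic propagator $e^{it\Delta_{x,\alpha}^2}$, a contraction-mapping fixed point giving local well-posedness (which, together with the conservation laws, yields global existence), and an interaction Morawetz estimate producing the global space-time control needed for scattering.

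First I would fix the linear estimates. On the Euclidean factor the biharmonic group obeys the dispersive bound $\norm{e^{it\Delta_x^2}}_{L^1\to L^\infty}\lesssim |t|^{-d/4}$, so the admissible exponents $(q,r)$ are those with
\[
\f{4}{q}=d\lf(\f12-\f1r\ri),\qquad 2\le q,r\le\infty.
\]
The compact factor $\mathbb{T}$ is incorporated through the frequency-localized scaling argument in the spirit of Tzvetkov--Visciglia and Hani--Pausader: locally in time the periodic direction may be treated as an additional Euclidean variable at the cost of an $\epsilon$-loss of derivatives, which is affordable precisely because $p$ lies in the energy-subcritical range $\f8d<p<\f8{d-3}$. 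I would then set up a solution space $X_T$ combining the $L^\infty_t H^2_{x,\alpha}$ norm with a Strichartz norm placed at the level of two derivatives.

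Local well-posedness then follows by running the Duhamel map
\[
\Phi(u)=e^{it\Delta_{x,\alpha}^2}u_0-i\int_0^t e^{i(t-s)\Delta_{x,\alpha}^2}\big(|u|^pu\big)\,ds
\]
in $X_T$. Combining the Strichartz estimates with the fractional product rule and the strict subcriticality of $p$, one bounds $|u|^pu$ in the dual Strichartz space by a high power of $\norm{u}_{X_T}$ times a positive power of $T$; choosing $T$ small depending only on $\norm{u_0}_{H^2_{x,\alpha}}$ makes $\Phi$ a contraction on a ball. Since the mass and energy are conserved and the defocusing energy controls $\norm{u}_{H^2_{x,\alpha}}$ uniformly in time, the existence time is bounded below along the flow and the solution extends globally by iteration.

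The heart of the matter, and the step I expect to be the main obstacle, is scattering. The plan is to establish an interaction Morawetz-type estimate for the waveguide 4NLS yielding an a priori global bound $\norm{u}_{L^q_t L^r_{x,\alpha}}\lesssim C\big(M(u),E(u)\big)$ on a scattering-critical norm. Two features make this delicate: the fourth-order operator replaces the classical second-order Morawetz multiplier computation with higher-order commutator terms whose positivity must be extracted carefully, and the product geometry forces the monotonicity formula to be run only in the Euclidean variables $x$ while the torus variable $\alpha$ is averaged out. Once this global bound is available, a standard subcritical continuity/bootstrap argument upgrades it to finiteness of the full Strichartz norm on $\mathbb{R}\times\mathbb{R}^d\times\mathbb{T}$, after which the usual limiting argument---showing that $e^{-it\Delta_{x,\alpha}^2}u(t)$ is Cauchy in $H^2_{x,\alpha}$ as $t\to\pm\infty$---produces the scattering states $f^{\pm}$ and finishes the proof.
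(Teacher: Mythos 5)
Your overall architecture --- biharmonic Strichartz estimates adapted to $\mathbb{R}^d\times\mathbb{T}$, contraction mapping plus conservation laws for global existence, and an interaction Morawetz estimate feeding a bootstrap and a Cauchy-sequence argument for scattering --- is exactly the strategy of the paper. However, the step you explicitly defer is not a technical afterthought but the entire mathematical content of the theorem, and the way you describe it suggests a reduction that provably fails. You write that the product geometry forces the monotonicity formula ``to be run only in the Euclidean variables $x$ while the torus variable $\alpha$ is averaged out.'' That is the scheme that works for second-order NLS (Tzvetkov--Visciglia): with a weight $a(x,y)$ independent of the periodic directions, every term involving $\partial_\alpha$ vanishes after integration by parts, and the Euclidean interaction Morawetz estimate can be quoted as a black box. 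For the bi-Laplacian this reduction is impossible: $\Delta_{x,\alpha}^2$ contains mixed derivatives such as $\partial_{x_i x_i \alpha\alpha}$, so the periodic-direction terms do not die out and, worse, they couple to the Euclidean directions. The paper's Section 4 consists precisely of computing these surviving contributions (its terms $M_3+M_4$) for the tensor product $w=u\otimes v$ with the weight $a=\langle x-y\rangle$: after repeated integration by parts they reduce to expressions weighted by $\partial^x_{ij}a$, $\Delta_x^2 a$ and $\Delta_x^3 a$, whose signs turn out favorable only because of the specific structure of $\langle x-y\rangle$ and dimension restrictions (e.g.\ the inequality $-d^2-8d+45\le 0$ needs $d\ge 4$, and $\Delta_x^3 a\ge 0$ needs $d\ge 5$). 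Nothing in the NLS literature can be cited here as a black box; without carrying out this computation and sign analysis there is no Morawetz inequality, hence no decay or global spacetime bound, and the scattering claim does not close.

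A secondary problem is your linear theory. Treating the periodic variable as an extra Euclidean variable locally in time with an $\epsilon$-loss of derivatives can suffice for local well-posedness in the subcritical range, but the scattering step requires global-in-time estimates for the propagator: to show $e^{-it\Delta_{x,\alpha}^2}u(t)$ is Cauchy in $H^2_{x,\alpha}$, the Duhamel tail must be estimated on unbounded time intervals. The paper avoids this by proving lossless, global-in-time mixed-norm Strichartz estimates $L^p_t L^q_x H^\gamma_\alpha$, obtained by expanding in eigenfunctions of $-\Delta_\alpha$ so that each mode solves a Euclidean fourth-order equation with symbol $(|\xi|^2+\lambda_j^2)^2$ (reduced to Guo--Wang); the torus direction is then handled through the algebra-type property of $H^{1/2+\delta}(\mathbb{T})$ rather than through Strichartz exponents. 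Your final limiting argument needs estimates of this global type, not the local-in-time, lossy ones you set up. (Your reading of the Morawetz output as a direct global bound on a spacetime norm, rather than the paper's localized-mass bound upgraded to a decay property via Gagliardo--Nirenberg and a contradiction argument, is an equivalent and acceptable variant.)
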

\begin{remark}
Theorem \ref{main} can be regarded as the 4NLS analogue of Theorem 1.1 and Theorem 1.3 in Tzvetkov-Visciglia \cite{TV2}. 
\end{remark}
\begin{remark}
We note that the required range $\frac{8}{d}<p<\frac{8}{d-3}$ is for the sake of scattering. If one only considers the well-posedness theory, the result can be extended to the case $0<p<\frac{8}{d}$ with little modifications. See \cite{TV2} for the NLS case.
\end{remark}
In fact, we can deal with more general settings (on waveguides with higher torus-dimensions) as follows,
\begin{align}\label{maineq2}
\begin{cases}
  (i\partial_t+\Delta_{x,\alpha}^{2})u+|u|^pu=0,\\
   u(0)= u_0(x,\alpha)\in H^2(\mathbb{R}^d\times \mathbb{T}^n),
\end{cases}
\end{align}
where $d\geq 5$ and $\frac{8}{d}<p<\frac{8}{d+n-4}$ ($n=2,3$).
And the corresponding result is
\begin{theorem}\label{main2}
The initial value problem \eqref{maineq2} has a unique local solution $u(t,x,\alpha)\in \mathcal{C}((-T,T);H_{x,\alpha}^{2} (\mathbb{R}^d\times \mathbb{T}^n))$ where $T=T(\|u_0\|_{H_{x,\alpha}^{2}}(\mathbb{R}^d\times \mathbb{T}^n))>0$; moreover, the solution $u(t, x, \alpha)$ can be extended globally in time by iteration. Furthermore it scattering in the energy space: , that is, there exist
$f^{\pm} \in H^2_{x,\alpha} (\mathbb{R}^d\times \mathbb{T}^n)$ such that
\begin{equation*}
    \lim_{t \rightarrow \pm \infty} \|u(t,x,\alpha)-e^{it\Delta_{x,\alpha}^2}f^{\pm}\|_{H^2_{x,\alpha} (\mathbb{R}^d\times \mathbb{T}^n)} =0.
\end{equation*}
\end{theorem}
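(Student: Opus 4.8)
The plan is to follow the blueprint of the model case Theorem \ref{main}, running the same four steps — (i) Strichartz estimates, (ii) local well-posedness by contraction, (iii) globalization from the conservation laws, and (iv) scattering via an interaction Morawetz estimate — while tracking how the torus dimension $n$ enters. The guiding heuristic is dimensional: writing $D=d+n$ for the total spatial dimension, treating $\mathbb{T}^n$ as $n$ extra Euclidean directions gives the $\dot H^2$ energy-critical power $p_c=\frac{8}{D-4}=\frac{8}{d+n-4}$, while $\frac{8}{d}$ is the $L^2$-critical power on $\mathbb{R}^d$ alone. Hence the hypothesis $\frac{8}{d}<p<\frac{8}{d+n-4}$ keeps \eqref{maineq2} simultaneously energy-subcritical and mass-supercritical, the regime in which both the contraction and the Morawetz argument close. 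Note, moreover, that for $d\ge5$ this interval is nonempty exactly when $d+n-4<d$, i.e. $n\le 3$; this is the structural origin of the restriction on the torus dimension.

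First I would establish the modified waveguide Strichartz estimates in the $\mathbb{R}^d\times\mathbb{T}^n$ geometry, adapting those of the model case. Since the compact factor carries no dispersion, the mechanism is unchanged: expand $u=\sum_{k\in\mathbb{Z}^n}u_k(x)\,e^{ik\cdot\alpha}$, apply the $\mathbb{R}^d$ biharmonic dispersive decay $\|e^{it\Delta_x^2}\|_{L^1\to L^\infty}\lesssim|t|^{-d/4}$ mode by mode, and resum in $\ell^2_k$; this produces essentially the $\mathbb{R}^d$-admissible space-time pairs, integrated over the torus, possibly with a summation loss that grows with $n$. With these estimates the local theory is the same as in the model case: set up the Duhamel map in a Strichartz space intersected with $\mathcal{C}_tH^2_{x,\alpha}$, control the nonlinearity $|u|^pu$ by Hölder across the admissible pairs together with the $H^2_{x,\alpha}$ product structure, and use the strict subcriticality $p<\frac{8}{d+n-4}$ to gain a positive power of $T$, yielding a contraction on a small ball and a unique local solution with $T=T(\|u_0\|_{H^2_{x,\alpha}})$.

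For globalization I would invoke conservation of mass and energy: in the defocusing, energy-subcritical regime the energy is coercive, so $E(u)$ and $M(u)$ bound $\|u(t)\|_{H^2_{x,\alpha}}$ uniformly in $t$, the local existence time does not shrink, and the solution iterates to a global one. Scattering is the only genuinely new ingredient. I would prove the interaction Morawetz-type estimate of the model case in the $\mathbb{R}^d\times\mathbb{T}^n$ setting, placing the Morawetz weight $|x-y|$ in the non-compact $\mathbb{R}^d$ directions only and integrating trivially over the torus, to obtain an a priori global space-time bound on a scattering norm. Feeding this back into the Strichartz estimates via a standard continuity/bootstrap argument gives finiteness of the global Strichartz norm, and scattering then follows by checking that the Duhamel tails $\int_t^{\pm\infty}e^{-is\Delta_{x,\alpha}^2}\big(|u|^pu\big)\,ds$ are Cauchy in $H^2_{x,\alpha}$ as $t\to\pm\infty$.

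The step I expect to be the main obstacle is matching the (lossy) periodic Strichartz estimates to the Morawetz output. For $n=2,3$ the scale-invariant Strichartz estimates on $\mathbb{T}^n$ genuinely lose derivatives — this is the decoupling regime — so one must verify that the loss is absorbable within the narrow window $\frac{8}{d}<p<\frac{8}{d+n-4}$. Compounding this, the interaction Morawetz estimate only yields decay in the non-compact $\mathbb{R}^d$ directions, so the torus directions must be handled separately — for instance folded into the uniform $H^2_{x,\alpha}$ bound — when converting the Morawetz output into a scattering norm that the available Strichartz pairs can in fact control.
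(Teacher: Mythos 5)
Your overall architecture coincides with the paper's: expand in Fourier modes in the periodic variable, prove $H^{\gamma}_{\alpha}$-valued Strichartz estimates that use only the dispersion in the $\mathbb{R}^d$ directions, run the contraction argument, globalize by the conserved mass and (coercive, defocusing) energy, and prove scattering from an interaction Morawetz estimate whose weight depends only on the Euclidean variables. However, one step fails as you wrote it, and it is precisely the one point where 4NLS differs from NLS. After projecting onto the mode $e^{ik\cdot\alpha}$, the component $u_k$ evolves under $e^{it(\Delta_x-|k|^2)^2}$, whose symbol is $(|\xi|^2+|k|^2)^2=|\xi|^4+2|k|^2|\xi|^2+|k|^4$; this is \emph{not} $e^{it\Delta_x^2}$ composed with a harmless constant phase. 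Because of the cross term $2|k|^2|\xi|^2$, you cannot ``apply the $\mathbb{R}^d$ biharmonic dispersive decay $|t|^{-d/4}$ mode by mode'': what is required is a Strichartz estimate for the whole family of propagators $e^{it(\Delta_x-\lambda^2)^2}$ that is \emph{uniform in the parameter} $\lambda$. This is exactly where the paper's proof of Lemma \ref{Strichartz} appeals to Guo--Wang \cite{GW} for the symbol $(|\xi|^2+\lambda_j^2)^2$. For NLS the issue is invisible, since $e^{it(\Delta_x-|k|^2)}=e^{-it|k|^2}e^{it\Delta_x}$; your proposal imports the NLS reduction without supplying the 4NLS substitute.

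Separately, the obstacle you single out as the main one is a phantom. No periodic (decoupling-type, lossy) Strichartz estimates on $\mathbb{T}^n$ are used anywhere in this framework: the torus variable only ever carries $L^2$-based Sobolev norms, so the $\ell^2_k$ resummation is lossless by orthogonality and Minkowski's inequality (the ``summation loss growing with $n$'' you allow for does not occur). Where the torus dimension genuinely bites is the Sobolev embedding $H_{\alpha}^{n/2+\delta}\hookrightarrow L_{\alpha}^{\infty}$, needed to close the nonlinear estimates (the analogues of Lemmas \ref{delta}, \ref{index1}, \ref{index2} with $\tfrac12+\delta$ replaced by $\tfrac{n}{2}+\delta$); compatibility with the $H^2$ energy space forces $s+\tfrac{n}{2}+\delta\le 2$. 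Thus the restriction $n\le 3$ has two sources: emptiness of the interval $\bigl(\tfrac{8}{d},\tfrac{8}{d+n-4}\bigr)$ for $n\ge 4$, which you identified, and this embedding constraint, which you did not. Once the uniform-in-$\lambda$ Strichartz input and the adjusted torus regularity are in place, the remainder of your plan --- Morawetz with Euclidean-only weight (Proposition \ref{Morawetz} and Remark \ref{rmk: Mora}), conversion to time-decay of $L^q_{x,\alpha}$ norms via the localized Gagliardo--Nirenberg inequality and a contradiction argument, bootstrap to global spacetime bounds, and the Cauchy property of the Duhamel tails --- runs exactly as in the paper.
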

\begin{remark}
Similarly to \eqref{maineq}, the exponent $p$ in \eqref{maineq2} also corresponds to the energy-subcritical and mass-supercritical range.
\end{remark}
\begin{remark}
We can see that in \eqref{maineq2} the torus-dimension is at most three. There are two  reasons. First, if $n=4$, there is no $p$ satisfying the condition $\frac{8}{d}<p<\frac{8}{d+n-4}$. At the end point case  $p=\frac{8}{d}$, the problem is both mass-critical and energy-critical which requires more techniques. See \cite{HP,Z1} for the NLS case. Second, from the aspect of Sobolev embedding (technical reason), one can see that the $L^{\infty}$-norm can be controlled by the $H^2$-norm (consistent with the initial energy space) if and only if the dimension is less than four. 
\end{remark}
\begin{remark}
Theorem \ref{main} and Theorem \ref{main2} are in the defocusing setting. In Section \ref{sec Discussion}, we will discuss the focusing scenario under the mass-subcritical setting. One may consider more general cases. In general, the dynamics of focusing equations are rich and interesting.
\end{remark}
\begin{remark}
Regarding Theorem \ref{main} and Theorem \ref{main2}, there are some related problems which are natural to raise, for example, the critical case when $p=\frac{8}{d}$ or $\frac{8}{d+n-4}$. We will discuss more in  Appendix \ref{sec Apx}.
\end{remark}
\subsection{Background}
The soliton instabilities (e.g. the finite time blowup) of nonlinear waves, which can be characterized by nonlinear Schr\"odinger equations in many canonical physical models, is one of the basic phenomena in nonlinear physics. 
It is well-known that the stability of the soliton instabilities depends on the number of space dimensions and strength of nonlinearity.
On the other hand, fourth-order Schr\"odinger equations have been introduced by Karpman \cite{Karpman} and Karpman and Shagalov \cite{KS} to investigate the stabilizing role of higher-order dispersive
effects for the soliton instabilities.
The following work \cite{FIP} by Fibich, Ilan and Papanicolaou studies the  self-focusing (i.e. finite time blowup in dimension two) and singularity formation of such fourth-order Schr\"odinger equations  from the mathematical viewpoint.
More precursory research on the basic properties of 4NLS can be found in \cite{BKS,GW2002,HHW1, HHW2, Segata}.
It is worth to mention that the defocusing energy-critical 4NLS in dimension eight was first proved in the series of work by Pausader \cite{Pau2, Pau1}
and then the higher dimension cases ($d\geq 9$) are handled by Miao, Xu and Zhao \cite{MXZ2}.
For more developments of 4NLS, we refer to 
\cite{MWZ,MXZ1, MZ,Pau3,PS,Zheng, PX} in the Euclidean space,
 \cite{CHKL,Dinh,GSWZ,HS, OW, Kwak} on the tori space and the references therein. 
Furthermore, the recent works  \cite{OS,OST,OT,OTW} studied the long behavior of 4NLS from the probabilistic viewpoint.

In today's backbone networks, data signals are almost exclusively transmitted by optical carriers in fibers.  Applications like the internet demand an increase of the available bandwidth in the network and reduction of costs for the transmission of data. Product spaces $\R^m \times \T^{n}$ are known as ‘waveguide manifolds’ and are of particular interest in nonlinear optics of telecommunications. 
Well-posedness theory and the long time dynamics for 4NLS on waveguide manifolds are understudied. Before we go into 4NLS on waveguide manifolds,
let us review some results of NLS in the waveguide manifold setting.  Generally, well-posedness theory and long time behavior of NLS are hot topics in the area of dispersive evolution equations and have been studied widely in recent decades. Naturally, the Euclidean case is first treated and the theory, at least in the defocusing setting, has been well established. We refer to \cite{Iteam1,BD3,KM1} for some important  Euclidean results. Moreover, we refer to \cite{CZZ,CGZ,HP,HTT1,HTT2,IPT3,IPRT3,KV1,YYZ,Z1,Z2,ZhaoZheng} with regard to the torus and waveguide settings.
  We may roughly think of the waveguide case as the ``intermediate point" between the Euclidean case and the torus case since the waveguide manifold is a product of  Euclidean spaces and the tori. The techniques used in Euclidean and torus settings are frequently combined and applied to the waveguides problems.
One of the most interesting long time behaviers of NLS in the waveguide spaces $\mathbb{R}^d\times \mathbb{T}^n$ is that the global solutions exhibit the different asymptotic behaviers: the \emph{scattering} \cite{CGYZ, Z1, Z2}  and \emph{modified scattering} \cite{ZPTV, Liu}, which means the solution asymptotical behaves like the resonant system based on the frequencies from the torus component $\mathbb{T}^n$,  depending on the different dimensions of the Euclidean-direction and of the torus-direction in $\mathbb{R}^d\times \mathbb{T}^n$. 
 To the authors' best knowledge, the current paper is the first scattering result towards understanding long time dynamics for the 4NLS within the context of waveguides.

Typically, the most important ingredient in a scattering argument is the Morawetz estimates. They  are {\it a priori} monotonicity formulas that hold for solutions to dispersive equations, which control the long-time behavior of solutions. For example, the Morawetz estimate for NLS on $\R^3$ in \cite{Iteam1} reads 
\begin{align*}
\norm{u}_{L_{t,x}^4 (\R \times \R^3)}^4 \lesssim \norm{u}_{L_{t}^{\infty} L_{x}^{2} (\R \times \R^3)}^3   \norm{u}_{L_{t}^{\infty} \dot{H}_{x}^{1}(\R \times \R^3) } . 
\end{align*}
Roughly speaking, such estimate suggests that certain $L_x^p$ norm of the solution decays in time, which implies the scattering effect. More Morawetz estimate results for NLS  can be found in \cite{CGT,PV,RV, V} for Euclidean spaces and \cite{TV2} on the waveguide setting. It is worth mentioning that to \cite{CGT,PV} both deal with the Morawetz estimates in low dimensions, and \cite{PV} relied on bilinear virial identities  while \cite{CGT} used  a tensor product method. In the partially periodic setting, the work \cite{TV2} modified the weight function in the Morawetz action by removing its dependence on the periodic direction and obtained a  suitable version  of the interaction Morawetz estimates. Let us also mention that \cite{TV2} used the available low dimensional Morawetz estimates in \cite{PV} as a black box in their partially periodic calculation. In fact,  to be able to make use of the Euclidean  result, all the terms in Morawetz computation along the periodic direction vanish after integration by parts. This benefits from   their well-chosen weight (independent of the periodic direction) and no mixed derivatives in Laplacian.

In the proof of the scattering result in Theorem \ref{main}, we need a suitable  interaction Morawetz-type estimate for 4NLS on $\R^m \times \T$ to close the scattering argument. What is known in the 4NLS context,  \cite{Pau2} proved  an interaction Morawetz estimate following previous analysis from \cite{Iteam1} for dimension $d\geq 7$ and  \cite{MWZ} extended the range of the dimension of the interactive estimate  to $d \geq 5$ in \cite{Pau2} by modifying a tensor product method appeared in  \cite{CGT}. Inspired by  \cite{TV2}, we wish to modify the weight function in the Morawetz action and use the low dimensional  interaction Morawetz estimates in \cite{MWZ} as a black box. However, after integration by parts, all the terms in Morawetz calculation on the periodic direction  would not die out, and  even worst, they mix with the Euclidean directions. This is simply because  the bi-Laplacian,  unlike Laplacian, contains mixed derivatives (for example in two dimensions $\Delta^2 = \partial_{x_1 x_1 x_1 x_1} + 2 \partial_{x_1 x_1 x_2 x_2} + \partial_{x_2 x_2 x_2 x_2}$, while $\Delta =\partial_{x_1 x_1 } + \partial_{x_2 x_2 }  $). Such terms ($\partial_{x_1 x_1 x_2 x_2}$) causes great difficulty in reducing the  partially periodic setting to its corresponding Euclidean setting. Therefore, we first `open the box', and use the tensor product method introduced in  \cite{CGT}  to compute all the mixed/non-mixed terms in the Morawetz action. Then since it is impossible to make  mixed terms disappear, we  carry  them all the way to the end of the calculation and hope that they could still give the right signs in the Morawetz inequality. Fortunately, it turns out that they do have the right signs, hence we manage to derive a Morawetz type estimate working for our setting.

\subsection{Organization of the rest of this paper}
In Section \ref{sec Pre}, we discuss some useful estimates and   fixed admissible exponents; in Section \ref{sec LWP}, we establish the well-posedness theory; in Section \ref{sec Morawetz}, we prove a Morawetz-type estimate which is the key ingredient to show the decay property of the solution; in Section \ref{sec Scattering}, we show the decay property and then use it to prove the scattering behavior; in Section \ref{sec Discussion}, we discuss the higher dimensional analogue and the focusing scenario; in Section \ref{sec Remarks}, we make a few more remarks on this research line; in Appendix \ref{sec Apx}, we include the decay property for NLS and 4NLS on general waveguide manifolds.

\subsection{Notations}
Throughout this paper, we use $C$ to denote  the universal constant and $C$ may change line by line. We say $A\lesssim B$, if $A\leq CB$. We say $A\sim B$ if $A\lesssim B$ and $B\lesssim A$. We also use notation $C_{B}$ to denote a constant depends on $B$. We use usual $L^{p}$ spaces and Sobolev spaces $H^{s}$. Moreover, we write $\inner{x} = (1+|x|^2)^{\f{1}{2}}$, and $p'$ for the dual index of $p \in (1,+\infty)$ in the sense that $\frac{1}{p^{'}}+\frac{1}{p}=1$. 

We regularly refer to the composed spacetime norms
\begin{equation}
    ||u||_{L^p_tL^q_z(I_t \times \mathbb{R}^m\times \mathbb{T}^n)}=\left(\int_{I_t}\left(\int_{\mathbb{R}^m\times \mathbb{T}^n} |u(t,z)|^q \, dz \right)^{\frac{p}{q}}  \, dt\right)^{\frac{1}{p}}.
\end{equation}

Similarly we can define the composition of three $L^p$-type norms like $L^p_tL^q_xL^2_{\alpha}$.

\section{Useful estimates and admissible exponents}\label{sec Pre}
In this section, we discuss some useful estimates (mainly Strichartz-type estimates in the setting of waveguides) which are fundamental for both of well-posedness and scattering theory. Moreover, we fix some admissible exponents for future use. 

We overview the Strichartz estimates for 4NLS in the Euclidean setting first. We refer to \cite{Pau2,Pau1} for more details. (See \cite{Taobook} for the classical NLS case.) There are two versions which corresponds to two types of admissible pairs ($S$-admissible and $B$-admissible). We state them respectively.

We say that $(p,q)$ is S-admissible if
\begin{equation}
    \frac{2}{p}+\frac{d}{q}=\frac{d}{2},\quad 2 \leq p,q \leq \infty \quad (p,q,d)\neq (2,\infty,2).
\end{equation}
We define the Strichartz norm by
\begin{equation}
    \|u\|_{S^{s}_{p,q}}:=\|u\|_{L^p_{t\in I}W_{x}^{s,q}}
\end{equation}
where $I=[0,T)$. Then one version of Strichartz estimates for 4NLS reads
\begin{lemma}[Strichartz estimate]
For S-admissible pairs $(p,q)$ and $(a,b)$, we
have
\begin{equation}
   \|e^{it\Delta_{x}^{2}} u_0\|_{S^{s}_{p,q}} \lesssim \| |\nabla|^{-\frac{2}{p}} u_0\|_{H_x^s}
\end{equation}
and
\begin{align}
\norm{\int_0^t e^{i(t-s) \Delta_{x}^2} F(s) \, ds}_{S^s_{p,q}} \lesssim \norm{ \abs{\nabla}^{s-\frac{2}{p} - \frac{2}{a}} F}_{L_{t \in I}^{a'} L_x^{b'}}
\end{align}
\end{lemma}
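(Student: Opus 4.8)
The plan is to run the classical $TT^\ast$ scheme adapted to the biharmonic propagator $e^{it\Delta_x^2}$. The one genuinely new feature compared with the second-order Schr\"odinger flow is the degeneracy of the symbol $|\xi|^4$ at the origin, which forces a Littlewood--Paley decomposition and is precisely what produces the gain of derivatives $|\nabla|^{-2/p}$ on the right-hand side. Concretely, I would (i) establish a frequency-localized dispersive estimate for $e^{it\Delta_x^2}$; (ii) interpolate it with $L^2$ conservation and run $TT^\ast$ together with the Hardy--Littlewood--Sobolev inequality to get the homogeneous estimate on each dyadic block; (iii) sum the blocks to recover the Sobolev norm; and (iv) promote the full-line inhomogeneous estimate to the retarded one by the Christ--Kiselev lemma.

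First I would prove the dispersive bound. With $P_N$ the Littlewood--Paley projection to $|\xi|\sim N$, the kernel of $e^{it\Delta_x^2}P_N$ is the oscillatory integral $\int e^{i(x\cdot\xi+t|\xi|^4)}\chi(\xi/N)\,d\xi$, whose phase has Hessian of size $\sim |t|N^2$ on the support of $\chi(\cdot/N)$; non-degenerate stationary phase then gives
\[
\norm{e^{it\Delta_x^2}P_N f}_{L_x^\infty}\lesssim (|t|N^2)^{-d/2}\norm{f}_{L_x^1}=|t|^{-d/2}N^{-d}\norm{f}_{L_x^1}.
\]
Interpolating with $\norm{e^{it\Delta_x^2}P_N f}_{L^2_x}=\norm{P_N f}_{L^2_x}$ and using that $S$-admissibility yields $\frac d2(1-\frac2q)=\frac2p$ and $d(1-\frac2q)=\frac4p$, one arrives at the pointwise-in-time decay
\[
\norm{e^{it\Delta_x^2}P_N f}_{L_x^q}\lesssim |t|^{-2/p}N^{-4/p}\norm{P_N f}_{L_x^{q'}}.
\]

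Next comes the $TT^\ast$ step. After commuting the smoothing factor $|\nabla|^{2/p}$ through the flow, the homogeneous block estimate is equivalent to
\[
\norm{\int_{\mathbb R} e^{i(t-s)\Delta_x^2}P_N F(s)\,ds}_{L_t^pL_x^q}\lesssim N^{-4/p}\norm{P_N F}_{L_t^{p'}L_x^{q'}},
\]
and Minkowski's inequality together with the pointwise decay reduces this to the scalar convolution $g\mapsto \int|t-s|^{-2/p}g(s)\,ds$ mapping $L_t^{p'}\to L_t^p$, which is exactly Hardy--Littlewood--Sobolev with exponents matched by $S$-admissibility (one checks $\frac1p=\frac1{p'}-(1-\frac2p)$, legitimate since $2/p<1$ in the non-endpoint range $p>2$). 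Taking square roots gives $\norm{e^{it\Delta_x^2}P_N u_0}_{L_t^pL_x^q}\lesssim N^{-2/p}\norm{P_N u_0}_{L^2}$, and summing over dyadic $N$ by the Littlewood--Paley square-function theorem (valid since $2\le q<\infty$ throughout the admissible range for $d\ge5$, the case $p=\infty,\ q=2$ reducing to $L^2$ conservation) together with Plancherel converts the factors $N^{-2/p}$ into $|\nabla|^{-2/p}$. Since $|\nabla|^s$ commutes with $e^{it\Delta_x^2}$, inserting $s$ derivatives yields the stated homogeneous estimate in $S^s_{p,q}$.

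For the inhomogeneous estimate I would combine the homogeneous bounds for the two admissible pairs. Writing $U(t)=e^{it\Delta_x^2}$ and using $\int U(t-s)F(s)\,ds=U(t)\int U(-s)F(s)\,ds$, the $(p,q)$ homogeneous estimate followed by the dual of the $(a,b)$ homogeneous estimate, namely $\norm{\int_{\mathbb R} U(-s)G(s)\,ds}_{L^2}\lesssim\norm{|\nabla|^{-2/a}G}_{L_t^{a'}L_x^{b'}}$, produces the full-line bound with total derivative loss $-2/p-2/a$, matching the exponent in the statement. Finally, replacing $\int_{\mathbb R}$ by the retarded $\int_0^t$ is handled by the Christ--Kiselev lemma, valid in the non-endpoint range $p>a'$. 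The main obstacle is twofold: the degeneracy of $|\xi|^4$ at the origin, which makes the Littlewood--Paley machinery and the derivative gain unavoidable, and the endpoint $S$-admissible pair $p=2$, where both Hardy--Littlewood--Sobolev and Christ--Kiselev break down and one must instead invoke the abstract Keel--Tao bilinear interpolation argument (or cite Pausader \cite{Pau2,Pau1}). I expect the endpoint to be the genuinely delicate point, the remaining steps being routine once the dispersive estimate is secured.
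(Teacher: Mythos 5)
Your proposal is correct and is essentially the same argument the paper relies on: the paper does not prove this lemma itself but quotes it from Pausader's work, where the proof runs exactly along your lines — the frequency-localized dispersive bound $\|e^{it\Delta_x^2}P_N f\|_{L^\infty_x}\lesssim |t|^{-d/2}N^{-d}\|f\|_{L^1_x}$ from stationary phase, interpolation and $TT^*$ with Hardy--Littlewood--Sobolev on each dyadic block, Littlewood--Paley summation converting the factors $N^{-2/p}$ into $|\nabla|^{-2/p}$, and Christ--Kiselev (with Keel--Tao covering the endpoint $p=2$) for the retarded term. Your identification of the double endpoint as the only genuinely delicate point is likewise accurate.
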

\begin{remark}
If we take the $\alpha$-direction (torus-component) into consideration, we expect to have
\begin{equation}
   \|e^{it\Delta_{x,\alpha}^{2}} u_0\|_{S^{s}_{p,q}H_{\alpha}^{\gamma}} \lesssim \| |\nabla|^{-\frac{2}{p}} u_0\|_{H_x^sH_{\alpha}^{\gamma}}
\end{equation}
and
\begin{equation}
    \norm{\int_0^{t}e^{i(t-s)\Delta_{x,\alpha}^{2}} F(s) \, ds}_{S^{s}_{p,q}H_{\alpha}^{\gamma}} \lesssim \norm{|\nabla|^{s-\frac{2}{p}-\frac{2}{a}}F}_{L_{t\in I}^{a^{'}}L_x^{b^{'}}H_{\alpha}^{\gamma}}.
\end{equation}
We do not give the proof here since we will not use them. We leave it for interested readers.
\end{remark}
If we consider B-admissible pair with $s$ regularity in the sense of ($\frac{4}{p}+\frac{d}{q}=\frac{d}{2}-s$), another version of Strichartz estimate reads
\begin{lemma}
\begin{equation}
   \|e^{it\Delta_{x}^{2}} u_0\|_{L_t^pL_x^q} \lesssim \|  u_0\|_{H_x^s}
\end{equation}
and
\begin{equation}
    \norm{\int_0^{t}e^{i(t-s)\Delta_{x}^{2}} F(s) \, ds}_{L_t^pL_x^q} \lesssim \| |\nabla|^s F\|_{L_{t\in I}^{a^{'}}L_x^{b^{'}}}.
\end{equation}
\end{lemma}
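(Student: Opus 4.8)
The plan is to deduce the $B$-admissible estimate from the $S$-admissible estimate of the previous lemma by a Littlewood--Paley argument, exploiting the fact that the discrepancy between the two admissibility conditions, namely $\frac{2}{p}+\frac{d}{q}=\frac d2$ versus $\frac4p+\frac dq=\frac d2-s$, is reabsorbed exactly by a Bernstein gain at each dyadic frequency. Since $e^{it\Delta_{x}^{2}}$ commutes with the Littlewood--Paley projections $P_N$, it suffices to prove the frequency-localized estimate for $P_N u_0$ with a clean gain of $N^s$ and then sum.

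For the homogeneous estimate, fix a $B$-admissible pair $(p,q)$ with $\frac4p+\frac dq=\frac d2-s$ and let $\tilde q$ be the space exponent making $(p,\tilde q)$ an $S$-admissible pair, i.e. $\frac{d}{\tilde q}=\frac d2-\frac2p$. Applying the $s=0$ case of the $S$-admissible estimate to $P_N u_0$ gives
\[
\|e^{it\Delta_{x}^{2}}P_N u_0\|_{L^p_tL^{\tilde q}_x}\lesssim\||\nabla|^{-2/p}P_N u_0\|_{L^2_x}\lesssim N^{-2/p}\|P_N u_0\|_{L^2_x}.
\]
Because $\frac{d}{\tilde q}-\frac dq=\frac2p+s\ge0$, one has $\tilde q\le q$, so Bernstein's inequality upgrades $L^{\tilde q}$ to $L^q$ at the cost of $N^{d(1/\tilde q-1/q)}=N^{2/p+s}$; the two powers of $N$ combine to exactly $N^s$, yielding $\|e^{it\Delta_{x}^{2}}P_N u_0\|_{L^p_tL^q_x}\lesssim N^s\|P_N u_0\|_{L^2_x}\approx\|P_N u_0\|_{\dot H^s}$. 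Since $p,q\ge2$, the Littlewood--Paley square-function inequality followed by Minkowski's inequality lets me pull the dyadic $\ell^2$-sum outside the space-time norm, and summing the frequency-localized bounds recovers $\|e^{it\Delta_{x}^{2}}u_0\|_{L^p_tL^q_x}\lesssim\|u_0\|_{H^s}$.

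For the inhomogeneous (Duhamel) estimate I would run the same localization scheme on the inhomogeneous $S$-admissible bound of the previous lemma, again checking that the Bernstein corrections on both the input pair $(a,b)$ and the output pair $(p,q)$ collapse the net frequency weight to the single factor $|\nabla|^s$ acting on $F$; alternatively one obtains it by $TT^*$-duality from the homogeneous estimate together with the Christ--Kiselev lemma to install the retarded cutoff $\int_0^t$. I expect the main obstacle to be bookkeeping rather than conceptual: one must verify that the Bernstein step is genuinely lossless, which is precisely where the identity $\frac4p+\frac dq=\frac d2-s$ is consumed, and in the $TT^*$ route one must track $|\nabla|^s$ through the duality pairing and confirm the strict exponent inequality required by Christ--Kiselev away from the double endpoint. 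The endpoints themselves cause no new trouble, as they are inherited from the $S$-admissible estimate used as a black box.
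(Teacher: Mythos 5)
You are correct, but your route is genuinely different from the paper's: the paper offers no proof of this lemma at all --- both Euclidean lemmas in Section 2 are quoted as known 4NLS results, with the reader referred to Pausader's papers (and to Guo--Wang for the underlying dispersive estimates) --- whereas you derive the $B$-admissible estimate from the $S$-admissible lemma stated just before it. Your homogeneous argument is complete and correct: for $(p,q)$ $B$-admissible with $s$ regularity and $(p,\tilde q)$ $S$-admissible, one has $\frac{d}{\tilde q}-\frac{d}{q}=s+\frac{2}{p}\ge 0$, so the Bernstein loss $N^{s+2/p}$ on each frequency-localized piece exactly cancels the gain $N^{-2/p}$ coming from the $|\nabla|^{-2/p}$ smoothing in the $S$-admissible bound, and the square-function-plus-Minkowski summation is legitimate since $2\le p,q<\infty$; you even obtain the slightly stronger $\dot H^s_x$ bound. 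Two small points you should make explicit: you need $p\ge 2$ for $(p,\tilde q)$ to be a bona fide $S$-admissible pair (harmless for the pairs used in this paper, where $d\ge 5$), and you should use the homogeneous Littlewood--Paley decomposition, since $|\nabla|^{-2/p}$ is not controlled by the $L^2$ norm on an inhomogeneous low-frequency block. For the Duhamel term, your $TT^*$/Christ--Kiselev route is the cleaner of your two options: writing the untruncated operator as $e^{it\Delta_x^2}$ composed with $\int e^{-is\Delta_x^2}(\cdot)\,ds$, applying the homogeneous $B$-estimate with $s$ regularity to the outer factor and the dual of a zero-regularity homogeneous estimate to the inner one, and commuting $|\nabla|^s$ through the propagator gives exactly $\||\nabla|^s F\|_{L^{a'}_tL^{b'}_x}$, after which Christ--Kiselev installs the retarded cutoff away from endpoints ($a'<p$). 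Note that the paper never pins down the admissibility class of $(a,b)$; your claim that the frequency weights ``collapse to $|\nabla|^s$'' is correct precisely when $(a,b)$ is $B$-admissible with zero regularity (other choices produce weights of the form $|\nabla|^{s-2/a}$, matching the pattern of the $S$-admissible inhomogeneous lemma), so your bookkeeping is as precise as the statement permits. As for what each approach buys: the paper's citation is economical and defers to the literature, while your derivation is self-contained, makes transparent why the condition $\frac{4}{p}+\frac{d}{q}=\frac{d}{2}-s$ is exactly the Bernstein-compatible one, and is in fact the mechanism by which the cited references obtain these estimates in the first place.
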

For convenience, we will apply the $B$-admissible version of Strichartz estimate. Moreover, if we take the $\alpha$-direction (torus-component) into consideration, we have
\begin{lemma}\label{Strichartz}
\begin{equation}
   \|e^{it\Delta_{x,\alpha}^{2}} u_0\|_{L_t^pL_x^qH_{\alpha}^{\gamma}} \lesssim \|  u_0\|_{H_x^sH_{\alpha}^{\gamma}}
\end{equation}
and
\begin{equation}
    \norm{\int_0^{t}e^{i(t-s)\Delta_{x,\alpha}^{2}} F(s) \, ds}_{L_t^pL_x^qH_{\alpha}^{\gamma}} \lesssim \| |\nabla|^s F\|_{L_{t\in I}^{a^{'}}L_x^{b^{'}}H_{\alpha}^{\gamma}}.
\end{equation}
\end{lemma}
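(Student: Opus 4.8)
The plan is to reduce the product-space estimate to a family of Euclidean estimates indexed by the torus frequencies, and then to resum. Since the multiplier $\langle \nabla_\alpha\rangle^\gamma = (1-\Delta_\alpha)^{\gamma/2}$ is a Fourier multiplier acting only in $\alpha$, it commutes with the propagator $e^{it\Delta_{x,\alpha}^2}$ and with the $L_t^pL_x^q$ norms; applying it to $u_0$ reduces matters to the case $\gamma=0$, i.e. to bounding $\|e^{it\Delta_{x,\alpha}^2}u_0\|_{L_t^pL_x^qL_\alpha^2}$ by $\|u_0\|_{H_x^sL_\alpha^2}$. Expanding $u_0(x,\alpha)=\sum_{k\in\mathbb{Z}^n}u_{0,k}(x)e^{ik\cdot\alpha}$ in the torus variable diagonalizes $\Delta_\alpha$ (it acts as $-|k|^2$ on the $k$-th mode), so that
\[
 e^{it\Delta_{x,\alpha}^2}u_0=\sum_{k\in\mathbb{Z}^n}\big(e^{it(\Delta_x-|k|^2)^2}u_{0,k}\big)(x)\,e^{ik\cdot\alpha},
\]
where each Euclidean propagator $e^{it(\Delta_x-|k|^2)^2}$ carries the shifted symbol $e^{it(|\xi|^2+|k|^2)^2}$ (the constant phase $e^{it|k|^4}$ being irrelevant for the estimate).

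Next I would use orthogonality in the torus variable together with Minkowski's inequality. By Plancherel in $\alpha$, for fixed $(t,x)$ one has $\|e^{it\Delta_{x,\alpha}^2}u_0(t,x,\cdot)\|_{L_\alpha^2}=\big(\sum_k|e^{it(\Delta_x-|k|^2)^2}u_{0,k}(x)|^2\big)^{1/2}$. Since the Strichartz exponents satisfy $p,q\ge 2$, Minkowski's inequality lets me pull the $\ell^2_k$ summation outside both the $L_x^q$ and the $L_t^p$ norms, giving
\[
 \|e^{it\Delta_{x,\alpha}^2}u_0\|_{L_t^pL_x^qL_\alpha^2}\le\Big(\sum_{k\in\mathbb{Z}^n}\|e^{it(\Delta_x-|k|^2)^2}u_{0,k}\|_{L_t^pL_x^q}^2\Big)^{1/2}.
\]
Applying the Euclidean $B$-admissible Strichartz estimate to each mode \emph{with a constant independent of $k$} and then resumming yields $\big(\sum_k\|u_{0,k}\|_{H_x^s}^2\big)^{1/2}=\|u_0\|_{H_x^sL_\alpha^2}$, which is the desired bound after undoing the $\gamma$-reduction. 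The inhomogeneous estimate is obtained in exactly the same way: the same decomposition reduces it to the uniform-in-$k$ Euclidean retarded estimate, with the Christ--Kiselev lemma handling the time-ordering of the Duhamel integral.

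The hard part is the clause emphasized above: the Euclidean Strichartz estimate for $e^{it(\Delta_x-|k|^2)^2}$ must hold with a constant uniform in $k$. For the second-order NLS treated in \cite{TV2} this uniformity is automatic, because shifting the Laplacian by $-|k|^2$ only multiplies the propagator by the phase $e^{-it|k|^2}$; for the bi-Laplacian it is not, since $(\Delta_x-|k|^2)^2=\Delta_x^2-2|k|^2\Delta_x+|k|^4$ genuinely contains the Schr\"odinger-type term $-2|k|^2\Delta_x$ that cannot be removed by a phase or by rescaling (the quartic and quadratic parts of the symbol scale differently). Thus I expect the core of the argument to be a uniform dispersive estimate $\|e^{it(\Delta_x-|k|^2)^2}f\|_{L_x^\infty}\lesssim |t|^{-d/4}\|f\|_{L_x^1}$, proved by a stationary-phase/van der Corput analysis of the oscillatory integral with phase $x\cdot\xi+t(|\xi|^2+|k|^2)^2$. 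Splitting the Euclidean frequencies into the regimes $|\xi|\lesssim|k|$ (where the quadratic part dominates and the decay is Schr\"odinger-like, hence at least as strong) and $|\xi|\gtrsim|k|$ (where the bi-Laplacian governs), one checks that the relevant Hessian is non-degenerate with bounds independent of $k$. Feeding this uniform dispersive bound into the $TT^*$ machinery, together with the Hardy--Littlewood--Sobolev inequality and a standard duality/interpolation argument over the admissible range, then produces the uniform Strichartz estimates required to close the resummation above.
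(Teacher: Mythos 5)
Your proposal follows essentially the same route as the paper's proof: decompose $u_0$ with respect to the eigenfunctions/Fourier modes of $-\Delta_\alpha$, reduce to Euclidean Strichartz estimates for the shifted fourth-order symbol $(|\xi|^2+\lambda_j^2)^2$ with constants uniform in the torus frequency, and resum in $\ell^2$. The only difference is that the crux you correctly isolate and propose to prove by hand -- the uniform-in-$k$ dispersive/Strichartz bound for $e^{it(\Delta_x-|k|^2)^2}$, which is genuinely nontrivial for the bi-Laplacian -- is handled in the paper by citing Guo--Wang \cite{GW} as a black box for this class of symbols.
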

\begin{proof}
The proof of Lemma \ref{Strichartz} is very similar to Proposition 2.1 of \cite{TV1}, which is the classical NLS case, so we just explain the difference from the NLS proof here. For  NLS, the main idea is decomposing the functions with respect to the orthonormal basis of $L^2(\mathbb{T})$ given by the eigenfunctions $\{\phi_j\}_j$ of $-\Delta_\alpha$. For 4NLS, we consider $\phi_j=\phi_j(\alpha)$ then
\begin{equation}
    (-\Delta_x-\Delta_{\alpha})^2\phi_j=\lambda_j^2\phi_j,\quad \lambda_j>0.
\end{equation}
So we can write $u(x,\alpha)$ as
\begin{equation}
    u(x,\alpha)=\sum_{j}u_j(x)\phi_j(\alpha),
\end{equation}
and $u_j(x)$ in the Fourier space satisfies
\begin{equation}
i\partial_t\hat{u}_j+(|\xi|^2+\lambda_j^2)^{2}\hat{u}_j=\hat{F}_j.    
\end{equation}
Hence we are reduced to Guo-Wang \cite{GW} for $(|\xi|^2+\lambda_j^2)^{2}$. And the estimate will follow in a standard way.
\end{proof}

\begin{remark}
Similar to \cite{TV1}, for the estimates above, the tori can be generalized to $k$-dimensional compact Riemannian manifold. This allows one to deal with the case when the torus dimension is higher than one (see Section \ref{sec Discussion}).
\end{remark}

Then we recall a useful lemma as follows (see \cite{TV2})
\begin{lemma}\label{delta}
For every $0<s<1$, $p>0$ there exists $C = C(p, s)>0$ such that
\begin{equation}
    \||u|^pu\|_{\dot{H}_{\alpha}^s} \leq C \|u\|_{\dot{H}_{\alpha}^s}\|u\|^p_{L_{\alpha}^{\infty}}.
\end{equation}
\end{lemma}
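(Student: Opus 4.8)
The plan is to reduce the claim to a single pointwise estimate on the map $F(z)=|z|^pz$, combined with the Gagliardo--Slobodeckij characterization of the fractional Sobolev seminorm on the compact one-dimensional manifold (here $\T$, or more generally $\mathcal{M}$). Recall that for $0<s<1$ one has the equivalence
\begin{equation}
\norm{f}_{\dot{H}_\alpha^s}^2 \sim \int_{\T}\int_{\T} \frac{|f(\alpha)-f(\beta)|^2}{|\alpha-\beta|^{1+2s}}\, d\alpha\, d\beta,
\end{equation}
where $|\alpha-\beta|$ denotes the geodesic distance and the implicit constants depend only on $s$ and on the fixed geometry. This double-integral representation is what makes a Moser-type bound transparent, since it involves only \emph{differences} of the function and no derivatives, so the product structure of the nonlinearity can be handled by an elementary pointwise inequality.

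First I would establish the pointwise estimate
\begin{equation}
|F(a)-F(b)| \leq C(p)\,(|a|^p+|b|^p)\,|a-b| \qquad \text{for all } a,b\in\mathbb{C}.
\end{equation}
Viewing $F\colon\mathbb{R}^2\to\mathbb{R}^2$ and writing $z=(z_1,z_2)$, a direct computation gives $\partial_k F_j = p|z|^{p-2}z_k z_j + |z|^p\delta_{jk}$, so the Jacobian satisfies $|DF(z)| \less |z|^p$; the apparent singularity $|z|^{p-2}$ is compensated by the factor $z_k z_j$, so this bound is valid for every $p>0$. Integrating $F$ along the segment $z(t)=b+t(a-b)$, $t\in[0,1]$, and using $|z(t)|\leq(1-t)|b|+t|a|\leq\max(|a|,|b|)$ then yields the stated inequality.

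Finally I would insert this pointwise bound into the Gagliardo representation of $\norm{|u|^pu}_{\dot{H}_\alpha^s}$:
\begin{equation}
\norm{|u|^pu}_{\dot{H}_\alpha^s}^2 \less \int_{\T}\int_{\T} \frac{\big(|u(\alpha)|^p+|u(\beta)|^p\big)^2\,|u(\alpha)-u(\beta)|^2}{|\alpha-\beta|^{1+2s}}\, d\alpha\, d\beta.
\end{equation}
Bounding $\big(|u(\alpha)|^p+|u(\beta)|^p\big)^2 \leq 4\norm{u}_{L_\alpha^{\infty}}^{2p}$ pointwise and pulling this factor out of the integral leaves exactly the Gagliardo seminorm of $u$, hence
\begin{equation}
\norm{|u|^pu}_{\dot{H}_\alpha^s}^2 \less \norm{u}_{L_\alpha^{\infty}}^{2p}\,\norm{u}_{\dot{H}_\alpha^s}^2,
\end{equation}
and taking square roots gives the claim with $C=C(p,s)$.

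The main obstacle is the differentiability estimate in the regime $0<p<1$, where $F(z)=|z|^pz$ is not $C^1$ at the origin in the naive sense. The point is that the Jacobian bound $|DF(z)|\less|z|^p$ nonetheless holds and vanishes as $z\to0$, so the fundamental-theorem-of-calculus argument goes through; if one wishes to be fully rigorous one can first work with the regularization $F_\varepsilon(z)=(|z|^2+\varepsilon)^{p/2}z$, for which the computation is unambiguous, and then pass to the limit $\varepsilon\to0$ using dominated convergence in the double integral. A secondary, purely technical point is the Gagliardo equivalence on a general compact manifold $\mathcal{M}$ rather than the flat torus, which one justifies through a partition of unity and local coordinates; the resulting constants depend only on $s$ and the fixed geometry and are therefore harmless here.
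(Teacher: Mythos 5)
Your proof is correct and takes essentially the same approach as the paper's: the paper does not prove this lemma itself but cites Tzvetkov--Visciglia \cite{TV2}, where the argument is precisely the combination you give, namely the Gagliardo--Slobodeckij characterization of $\dot{H}^s_{\alpha}$ for $0<s<1$ together with the elementary pointwise bound $\bigl| |a|^p a - |b|^p b \bigr| \lesssim (|a|^p+|b|^p)\,|a-b|$. Your additional care about the regime $0<p<1$ (via the regularization $F_\varepsilon$) is a welcome, harmless refinement of that same argument.
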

Next we discuss the admissible exponents which will be used in the following sections. The `double subcritical' natural gives us enough room to establish well-posedness theory and prove the scattering. We refer to Section \ref{sec LWP} for the well-posedness part and Section \ref{sec Scattering} for the scattering for more details. Moreover, one may compare this   with Section 3 and Section 6 in \cite{TV2} where NLS was considered.

We summary the indices in the following two lemmas. The first one will be used in the proof of  well-posedness theory in Section \ref{sec LWP}  while the second one will be used in the proof of scattering in Section \ref{sec Scattering}.
\begin{lemma}\label{index1}
Consider $s>\frac{1}{2}$ and $\delta>0$ satisfy
\begin{equation}
   s+\frac{1}{2}+\delta \leq 2.
\end{equation}
One can find B-admissible pair $(l,m)$, B-admissible pair (with $s$ regularity) $(q,r)$ and dual B-admissible (with $s$-regularity) $(\tilde{q},\tilde{r})$ satisfy
\begin{equation}
    \frac{1}{\tilde{r}^{'}}=\frac{p+1}{r},\frac{1}{\tilde{q}^{'}}>\frac{p+1}{q},
\end{equation}
\begin{equation}
    \frac{1}{m^{'}}=\frac{1}{m}+\frac{p}{r},\quad \frac{1}{l^{'}}>\frac{1}{l}+\frac{p}{q},
\end{equation}
and
\begin{equation}
    2<\frac{mp}{m-2}<\frac{2d}{d-3}.
\end{equation}
\end{lemma}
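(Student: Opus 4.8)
The plan is to treat the statement as a feasibility problem for the six exponents and to show the admissible set is a nonempty open interval. First I would make explicit the scaling relations hidden in the admissibility labels: $\frac4l+\frac dm=\frac d2$ for the B-admissible pair $(l,m)$, and $\frac4q+\frac dr=\frac d2-s$ (resp.\ $\frac4{\tilde q}+\frac d{\tilde r}=\frac d2-s$) for the $s$-regularity pairs $(q,r)$ and $(\tilde q,\tilde r)$. Adjoining the two explicit H\"older identities $\frac1{\tilde r'}=\frac{p+1}r$ and $\frac1{m'}=\frac1m+\frac pr$, and working throughout with the reciprocals $\frac1l,\dots,\frac1{\tilde r}$, these are five affine relations among six quantities, so they leave a single degree of freedom. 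I would take the free parameter to be $\frac1r$ and solve everything else as an explicit affine function of it: $\tilde r$ from the first H\"older identity, then $\tilde q$ and $q$ from the two $s$-scalings, $m$ from the second H\"older identity, and $l$ from the plain scaling.

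The key simplification I would extract next is that the identity $\frac1{m'}=\frac1m+\frac pr$ forces $m=\frac{2r}{r-p}$, whence the clean relation $\frac{mp}{m-2}=r$. Consequently the two-sided requirement $2<\frac{mp}{m-2}<\frac{2d}{d-3}$ collapses to the transparent condition $2<r<\frac{2d}{d-3}$. I would then translate the admissibility ranges (every exponent $\ge 2$, every dual exponent $\ge 1$, each reciprocal in $[0,1]$) into one-sided bounds on $r$: for instance $q\le\infty$ gives $r\ge\frac{2d}{d-2s}$, positivity of $\tilde r$ gives $r>p+1$, and $l\ge 2$ gives $r\ge\frac{dp}4$. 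Intersecting these with $\bigl(2,\frac{2d}{d-3}\bigr)$ produces a candidate window for $r$.

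Showing this window is nonempty is where the hypotheses enter. From $s+\frac12+\delta\le 2$ with $\delta>0$ I get $s<\frac32$, hence $\frac{2d}{d-2s}<\frac{2d}{d-3}$; and from $d\ge 5$ together with $p<\frac8{d-3}$ I get both $p+1<\frac{2d}{d-3}$ and $\frac{dp}4<\frac{2d}{d-3}$. Thus all the lower endpoints sit strictly below the upper endpoint $\frac{2d}{d-3}$, so an open interval of admissible $r$ survives.

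Finally I would verify the two strict inequalities $\frac1{\tilde q'}>\frac{p+1}q$ and $\frac1{l'}>\frac1l+\frac pq$. Substituting the affine expressions, the structural feature I expect is that all $\frac1r$-terms cancel, so each inequality reduces to a relation in $s,p,d$ alone, respectively $p<\frac{4(2+s)}{d-2s}$ and the binding condition $p<\frac8{d-2s}$. The main obstacle will be reconciling this binding condition with the prescribed range $p<\frac8{d-3}$: since $\frac8{d-2s}\uparrow\frac8{d-3}$ as $s\uparrow\frac32$, one must exploit the freedom to push $s$ close to $\frac32$ (i.e.\ $\delta$ small), which is exactly what $s+\frac12+\delta\le 2$ permits, while $s>\frac12$ keeps the remaining admissibility bounds and the lower constraint $p>\frac8d$ in force. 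I expect this simultaneous balancing, strict time-integrability gaps (which later furnish a positive power of $T$ in the contraction) against the two-sided spatial bound, all tuned to the two endpoints of the $p$-range, to be the only genuinely delicate point; the rest is bookkeeping.
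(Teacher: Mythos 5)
The paper gives no proof of this lemma at all (it is dismissed with ``We omit the proofs since they are straightforward''), so your attempt can only be measured against the way the lemma is consumed elsewhere in the paper. Much of your bookkeeping is right and is surely part of any correct proof: parametrizing everything by $1/r$; the identity $m=\frac{2r}{r-p}$, hence $\frac{mp}{m-2}=r$, which collapses the last condition to $2<r<\frac{2d}{d-3}$; the cancellation of all $1/r$-terms in the two strict inequalities; the binding condition $p<\frac{8}{d-2s}$ coming from $\frac{1}{l'}>\frac1l+\frac{p}{q}$; and the endpoint checks $p+1,\ \frac{dp}{4},\ \frac{2d}{d-2s}<\frac{2d}{d-3}$ using $d\ge5$, $p<\frac{8}{d-3}$, $s<\frac32$.

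There are, however, two genuine gaps. First, a structural one: you take the dual pair to satisfy $\frac{4}{\tilde q}+\frac{d}{\tilde r}=\frac d2-s$, but Lemma \ref{index2} pins down what the paper means by ``dual B-admissible (with $s$-regularity)'' through the relation $\frac{4}{q}+\frac{d}{\tilde r}+\frac{4}{\tilde q}+\frac{d}{r}=d$, i.e. $\frac{4}{\tilde q}+\frac{d}{\tilde r}=\frac d2+s$. That is exactly the scaling under which the derivative-free Duhamel estimate $\|\int_0^t e^{i(t-s)\Delta^2}F\,ds\|_{L_t^qL_x^rH_\alpha^\gamma}\lesssim\|F\|_{L_t^{\tilde q'}L_x^{\tilde r'}H_\alpha^\gamma}$ -- which is how the pair $(\tilde q,\tilde r)$ is used in Step 1 of Section \ref{sec LWP} -- is scale-invariant; with your sign the two sides scale differently (off by $2s$), so the exponents you construct cannot be fed into the contraction argument the lemma exists to serve.

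Second, your feasibility check is incomplete, and your prescribed tuning of $s$ actually fails on part of the range. You compare the lower bounds on $r$ only against the single upper bound $\frac{2d}{d-3}$, but the requirement $\frac{1}{\tilde q}\ge 0$ produces another upper bound: under your convention $\frac{1}{\tilde q}=\frac14\bigl(\frac{d(p+1)}{r}-\frac d2-s\bigr)$, so $r\le\frac{2d(p+1)}{d+2s}$, and compatibility with $r\ge\frac{2d}{d-2s}$ forces $s\le\frac{pd}{2p+4}$. This contradicts ``push $s$ close to $\frac32$'' whenever $p$ is near $\frac 8d$ and $d<12$: for $d=5$, $p=1.61$, $s=1.49$ one needs $r\ge\frac{10}{2.02}\approx 4.95$ and simultaneously $r\le\frac{26.1}{7.98}\approx 3.27$, an empty window. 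So as written the argument breaks down near the mass-critical endpoint; the repair is either a genuinely two-sided choice $\frac d2-\frac 4p<s<\frac{pd}{2p+4}$ (which does work, but is not what you proposed), or -- cleaner and consistent with the paper -- adopting the convention $\frac{4}{\tilde q}+\frac{d}{\tilde r}=\frac d2+s$, under which the extra upper bound becomes the harmless $r\le\frac{2d(p+1)}{d-2s}$, every window constraint is compatible for all $s\in(\frac12,\frac32)$, both strict inequalities reduce to exactly $p<\frac{8}{d-2s}$, and your final step (choosing $s$ close to $\frac32$, $\delta$ small) then does close the proof for the whole range $\frac 8d<p<\frac{8}{d-3}$.
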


\begin{lemma}\label{index2}
Consider $s>\frac{1}{2}$ and $\delta>0$ satisfy
\begin{equation}
   s+\frac{1}{2}+\delta \leq 2.
\end{equation}
Then one can find B-admissible (with $s$-regularity) indices $(q_{\theta},r_{\theta})$, dual B-admissible (with $s$-regularity) $(\tilde{q}_{\theta},\tilde{r}_{\theta})$ and B-admissible indices $(l,m)$ satisfy the following relations.
\begin{equation}
    \frac{4}{q_{\theta}}+\frac{d}{r_{\theta}}=\frac{d}{2}-s,\frac{4}{q_{\theta}}+\frac{d}{\tilde{r}_{\theta}}+\frac{4}{\tilde{q}_{\theta}}+\frac{d}{r_{\theta}}=d,
\end{equation}
\begin{equation}
    \frac{1}{(p+1)\tilde{q_{\theta}}^{'}}=\frac{\theta}{q_{\theta}}, \frac{1}{(p+1)\tilde{r_{\theta}}^{'}}=\frac{\theta}{r_{\theta}}+\frac{2(1-\theta)}{pd},
\end{equation}
and
\begin{equation}
    \frac{4}{l}+\frac{d}{m}=\frac{d}{2},\frac{1}{m^{'}}=\frac{1}{m}+\frac{p}{r_{\theta}},\frac{1}{l^{'}}=\frac{1}{l}+\frac{p}{q_{\theta}}.
\end{equation}
\end{lemma}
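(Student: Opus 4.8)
The plan is to read the entire list of conditions as one \emph{affine} system in the reciprocal exponents. Writing $a=\tfrac1{q_\theta}$, $b=\tfrac1{r_\theta}$, $\tilde a=\tfrac1{\tilde q_\theta}$, $\tilde b=\tfrac1{\tilde r_\theta}$ and rewriting each dual index through $\tfrac1{\tilde q_\theta'}=1-\tilde a$, $\tfrac1{\tilde r_\theta'}=1-\tilde b$, $\tfrac1{l'}=1-\tfrac1l$, $\tfrac1{m'}=1-\tfrac1m$, every displayed relation becomes linear, so the existence assertion is equivalent to solving an explicit linear system and then checking that the solution lands in the admissible ranges. I would split the unknowns into the primary pair $(a,b)$, the dual pair $(\tilde a,\tilde b)$ and the auxiliary pair $(\tfrac1l,\tfrac1m)$, solve for the last two in terms of $(a,b)$ and $\theta$, and reduce the problem to a one-parameter family.

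Carrying this out, the second display gives the dual pair outright, $\tilde a=1-(p+1)\theta a$ and $\tilde b=1-(p+1)\bigl(\theta b+\tfrac{2(1-\theta)}{pd}\bigr)$, while the last two relations of the third display give $\tfrac1l=\tfrac12(1-pa)$ and $\tfrac1m=\tfrac12(1-pb)$. Feeding these into the $B$-admissibility $\tfrac4l+\tfrac dm=\tfrac d2$ of $(l,m)$ collapses it, after a one-line computation, to the scalar identity $4a+db=\tfrac4p$; comparing with the $B$-admissibility $4a+db=\tfrac d2-s$ of $(q_\theta,r_\theta)$ shows the system is consistent exactly when the scaling relation $s=\tfrac d2-\tfrac4p$ holds. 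Crucially this identity only constrains the \emph{common value} $4a+db$, not the individual position on that line, so one genuine degree of freedom survives. Substituting the formulas for $\tilde a,\tilde b$ into the remaining gap identity $\tfrac4{q_\theta}+\tfrac d{\tilde r_\theta}+\tfrac4{\tilde q_\theta}+\tfrac d{r_\theta}=d$ and using $4a+db=\tfrac4p$ turns it into a single linear equation that pins down $\theta$. At this stage all exponents are explicit functions of the one remaining parameter, say $b$.

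The real work, and the step I expect to be the main obstacle, is to verify that this one-parameter family actually meets \emph{all} the range requirements at once: $2\le q_\theta,r_\theta,l,m\le\infty$ (equivalently $a,b,\tfrac1l,\tfrac1m\in[0,\tfrac12]$, with the forbidden endpoint avoided), the dual indices and their conjugates in the correct subintervals of $(1,\infty)$, and $\theta$ in $[0,1]$. Each requirement is an affine inequality in $b$, so the feasible set is an interval and the whole lemma amounts to proving this interval is nonempty. This is precisely where the strict \emph{double-subcritical} window $\tfrac8d<p<\tfrac8{d-3}$, together with $d\ge5$, $s>\tfrac12$ and $s+\tfrac12+\delta\le2$, is used: the strict lower bound $p>\tfrac8d$ keeps the admissibility upper bounds away from the constraints forced by the auxiliary pair (the $a,b\le\tfrac1p$ coming from $\tfrac1l,\tfrac1m\ge0$), the strict upper bound $p<\tfrac8{d-3}$ keeps $s=\tfrac d2-\tfrac4p$ strictly below $\tfrac32$ (so that a positive $\delta$ exists), and $s>\tfrac12$ forces $p>\tfrac8{d-1}$. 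I would finish by computing the two endpoints of the admissible $b$-interval and checking, with the help of these strict inequalities and $d\ge5$, that the lower endpoint is strictly below the upper one, so that an open set of valid exponents remains; arguing by continuity in $p$ across the window is cleaner here than committing to one closed-form choice.
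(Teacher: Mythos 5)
The paper offers no proof of record for this lemma --- it is stated and dismissed with ``We omit the proofs since they are straightforward'' --- so there is nothing to compare routes against; your plan of reading the conditions as a linear system in the reciprocal exponents is a natural way to attempt a proof, and the algebra you do carry out is correct: the dual and auxiliary exponents are as you say, B-admissibility of $(l,m)$ collapses to $4a+db=\tfrac{4}{p}$, and consistency with the $s$-admissibility of $(q_\theta,r_\theta)$ forces $s=\tfrac{d}{2}-\tfrac{4}{p}$.

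The gap is precisely the step you defer. You assert that the gap identity ``pins down $\theta$'' and then treat what remains as a routine nonemptiness check for an interval in $b$; but if you solve that linear equation, the value it pins $\theta$ to is $\theta=1$. Indeed, substituting $\tilde a=1-(p+1)\theta a$ and $\tilde b=1-(p+1)\theta b-\tfrac{2(p+1)(1-\theta)}{pd}$ into $4a+db+4\tilde a+d\tilde b=d$ gives
\begin{equation*}
(4a+db)\bigl[1-(p+1)\theta\bigr]+4-\frac{2(p+1)(1-\theta)}{p}=0,
\end{equation*}
and inserting $4a+db=\tfrac{4}{p}$ yields $2p+2=2(p+1)\theta$, i.e.\ $\theta=1$, independently of the surviving free parameter $b$. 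So the one degree of freedom you identified never reaches $\theta$: every solution of the full coupled system is degenerate. This has three consequences your write-up does not confront. First, your feasibility analysis cannot deliver what it implicitly aims for, namely a solution with $\theta\in(0,1)$; the system is rigid, not an interval problem, so ``arguing by continuity in $p$'' cannot help. Second, with $\theta=1$ the second display reduces to $(p+1)\tilde q_\theta'=q_\theta$ and $(p+1)\tilde r_\theta'=r_\theta$, so the $L_t^\infty L_x^{pd/2}$ factor disappears; this is fatal for the only use of the lemma (Section 5, Step 2), where the positive power $(1+p)(1-\theta)$ on the decaying norm is exactly what produces the small constant closing the continuity argument. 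Third, the forced value $s=\tfrac{d}{2}-\tfrac{4}{p}$ together with the hypothesis $s>\tfrac12$ --- which you record as ``$s>\tfrac12$ forces $p>\tfrac{8}{d-1}$'' without remarking on it --- means the relations are incompatible with the hypotheses whenever $\tfrac{8}{d}<p\le\tfrac{8}{d-1}$, i.e.\ on part of the paper's standing range. In short, your own reduction, carried one line further, shows that the system as literally stated admits only the degenerate solution $\theta=1$; a genuine proof must decouple the exponents --- the pair entering the H\"older relations with $(l,m)$ cannot be the same pair used in the $\theta$-interpolation, equivalently the regularity $s$ must be allowed to move with $\theta$, as in the Tzvetkov--Visciglia scheme this lemma is modeled on --- rather than solve the single rigid system you (faithfully to the statement) set up.
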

We omit the proofs since they are straightforward.

\section{Well-posedness theory}\label{sec LWP}
In this section, we establish local well-posedness theory for the 4NLS in Theorem \ref{main} by the standard contraction mapping method. Then together with the conservation law, we extend the solution to global. The main works are constructing proper function spaces which allows one to show that the natural Duhamel mapping is a contraction mapping. It is tightly based on Strichartz estimates for 4NLS on $\mathbb{R}^d\times \mathbb{T}$  and careful choices of the exponents. We refer to Section 4 of \cite{TV2} for the NLS analogue.\vspace{3mm}

First, we introduce the integral operator by Duhamel formula,
\begin{equation}
    \Phi_{u_0}(u) :=e^{it\Delta_{x,\alpha}^{2}} u_0+i\int_0^{t}e^{i(t-s)\Delta_{x,\alpha}^{2}} (|u|^{p}u) \, ds.
\end{equation}
For convenience, we will only write $\Delta$ instead of $\Delta_{x,\alpha}$ all along this section. We then define the following three norms,
\begin{align*}
 \|u\|_{X_T} & :=\|u\|_{L_t^qL_x^rH_{\alpha}^{\frac{1}{2}+\delta}([-T,T]\times \mathbb{R}^d \times \mathbb{T})},\\
  \|u\|_{Y_T^1} & :=\sum_{k=0,1,2}\|\nabla^k_x u\|_{L^l_tL^m_xL^2_{\alpha}([-T,T]\times \mathbb{R}^d \times \mathbb{T})},\\
\|u\|_{Y_T^2} &:=\sum_{k=0,1,2}\|\partial^k_{\alpha} u\|_{L^l_tL^m_xL^2_{\alpha}([-T,T]\times \mathbb{R}^d \times \mathbb{T})},
\end{align*}
where $(l,m)$ is B-admissible and $(q,r)$ is B-admissible with $s$ regularity ($s+\frac{1}{2}+\delta \leq 2$) which are chosen according to Lemma \ref{index1} and Lemma \ref{index2}. Combining them together, we define
\begin{equation}
    \|u\|_{Z_T} :=\|u\|_{X_T}+\|u\|_{Y_T^1}+\|u\|_{Y_T^2}.
\end{equation}
Also, for convenience, we will omit writing $([-T,T]\times \mathbb{R}^d \times \mathbb{T})$ all along this section. Now we show that the Duhamel operator $\Phi_{u_0}$ is a contraction mapping. The following steps are standard, which are corresponding to those in Section 4 of \cite{TV2}.\vspace{3mm}

\textbf{Step 1. ($\Phi_{u_0}$ is from $Z$ to $Z$)} For all $u_0 \in H^2_{x,\alpha}$, there exist $T=T(\|u_0\|_{H^2_{x,\alpha}}>0$ and $R=R(\|u_0\|_{H^2_{x,\alpha}})>0$ such that $\Phi_{u_0}(B_{Z_{T^{'}}}) \subset B_{Z_{T^{'}}}$, for any $T^{'}<T$.

Consider the $X$ norm first. By Strichartz estimates, Lemma \ref{delta} and the H\"older inequality,
\begin{equation}
\| |u|^p u \|_{L_t^{\tilde{q}^{'}}L_x^{\tilde{r}^{'}}H_{\alpha}^{\frac{1}{2}+\delta}} \lesssim \| \|u\|^{p+1}_{H_{\alpha}^{\frac{1}{2}+\delta}} \|_{L_t^{\tilde{q}^{'}}L_x^{\tilde{r}^{'}}} \lesssim T^{\beta(p)}\|u\|^{p+1}_{L_t^qL_x^rH_{\alpha}^{\frac{1}{2}+\delta}},
\end{equation}
with $\beta(p)>0$. Here we choose the indices such that,
\begin{equation}
    \frac{1}{\tilde{r}^{'}}=\frac{p+1}{r},\frac{1}{\tilde{q}^{'}}>\frac{p+1}{q}.
\end{equation}
It is manageable since the problem is subcritical, which is also similar to the NLS case. See Lemma \ref{index1}.\vspace{3mm}

Consider $Y^1$ and $Y^2$ norms. By Strichartz estimate and the H\"older inequality,
\begin{equation}
    \aligned
    \|D^k (|u|^p u)\|_{L^{l^{'}}_tL^{m^{'}}_xL^{2}_{\alpha}} & \lesssim \| \|D^k u\|_{L^{2}_{\alpha}} \|u\|^p_{L_{\alpha}^{\infty}}\|_{L^{l^{'}}_tL^{m^{'}}_x} \\
    &\lesssim \| \|D^k u\|_{L^{2}_{\alpha}} \|u\|^p_{H_{\alpha}^{\frac{1}{2}+\delta}}\|_{L^{l^{'}}_tL^{m^{'}}_x} \\
    &\lesssim T^{\beta(p)}\|D^k u\|_{L^l_tL^m_xL^2_{\alpha}}\|u\|^p_{L_t^qL_x^rH_{\alpha}^{\frac{1}{2}+\delta}},
    \endaligned
\end{equation}
with $\beta(p)>0$, where $D$ stands for $\nabla_x,\partial_{\alpha}$ . (We have also used the fractional rule, Lemma A4 in Kato \cite{kato1995nonlinear}.)\vspace{3mm}

Here we choose the indices such that,
\begin{equation}
    \frac{1}{m^{'}}=\frac{1}{m}+\frac{p}{r},\quad \frac{1}{l^{'}}>\frac{1}{l}+\frac{p}{q}.
\end{equation}
It is similar to the NLS case and it is also manageable.\vspace{3mm}

Thus, taking the above estimates into considerations, we can take the proper $T=T(\|u_0\|_{H^2_{x,\alpha}}$ and 
$R=R(\|u_0\|_{H^2_{x,\alpha}}$ such that $\Phi_{u_0}(B_{Z_{T^{'}}}) \subset B_{Z_{T^{'}}}$.\vspace{3mm}

\textbf{Step 2. (To show $\Phi_{u_0}$ is a contraction.)} Let $T,R > 0$ be as in the Step 1. Then there exist $\bar{T}=\bar{T}(\|u_0\|_{
H^2_{x,\alpha}})<T$ such that $\Phi_{u_0}$ is a contraction on $B_{Z_{\bar{T}}}(0,R)$, equipped with the norm $L_{\bar{T}}^qL_x^rL^2_{\alpha}$.\vspace{3mm}

Now we check that the mapping $\Phi_{u_0}$ is a contraction as follows. Using Strichartz estimate and Sobolev embedding,
\begin{equation}
\aligned
    \| \Phi_{u_0}(v_1)-\Phi_{u_0}(v_2)\|_{L_t^qL_x^rL^2_{\alpha}} & \lesssim \| |v_1|^pv_1- |v_2|^pv_2 \|_{L_t^{\tilde{q}^{'}}L_x^{\tilde{r}^{'}}L^2_{\alpha}}\\
    &\lesssim \| \|v_1-v_2\|_{L^2_{\alpha}}(\|v_1\|^p_{L_{\alpha}^{\infty}}+\|v_2\|^p_{L_{\alpha}^{\infty}})\|_{L_t^{\tilde{q}^{'}}L_x^{\tilde{r}^{'}}} \\
    &\lesssim T^{\beta(p)} \| v_1-v_2\|_{L_t^qL_x^rL^2_{\alpha}} \parenthese{ \|v_1\|^{p}_{L_t^qL_x^rH_{\alpha}^{\frac{1}{2}+\delta}}+\|v_2\|^{p}_{L_t^qL_x^rH_{\alpha}^{\frac{1}{2}+\delta}}},
\endaligned    
\end{equation}
with $\beta(p)>0$. And we conclude by taking $\bar{T}$ small sufficiently.\vspace{3mm}

\textbf{Step 3. (Uniqueness and Existence in $Z$)}

It is the same to the NLS case \cite{TV2} so we omit it. It follows from the contraction mapping argument.\vspace{3mm} 

\textbf{Step 4. $u\in \mathcal{C}((-T,T);H^2_{x,{\alpha}})$}

It is the same to the NLS case \cite{TV2} so we omit it. We just use Strichartz estimates again as in Step 1 to guarantee that $u(t,x,y)\in \mathcal{C}((-T,T);H^2_{x,{\alpha}}) $.\vspace{3mm}

\textbf{Step 5. (Unconditional uniqueness)}
We prove that for $u_1, u_2 \in C ((-T,T); H^2_{x,\alpha})$ are fixed points of $\Phi_{u_0}$, then $u_1 = u_2$. 

Considering the difference of the integral equations satisfied by $u_1$ and $u_2$ and using Strichartz estimates,
\begin{equation}
    \aligned
  \|u_1-u_2\|_{L^l_tL^m_xL^2_{\alpha}}&  \lesssim \||u_1|^pu_1-|u_2|^pu_2\|_{L^{l^{'}}_tL^{m^{'}}_xL^2_{\alpha}}  \\
    &\lesssim \|u_1-u_2\|_{L^l_tL^m_xL^2_{\alpha}} \parenthese{\|u_1\|^{p}_{L_t^{\frac{lp}{l-2}}L_x^{\frac{mp}{m-2}}H_{\alpha}^{\frac{1}{2}+\delta}}+\|u_2\|^{p}_{L_t^{\frac{lp}{l-2}}L_x^{\frac{mp}{m-2}}H_{\alpha}^{\frac{1}{2}+\delta}}}\\
    &\lesssim \|u_1-u_2\|_{L^l_tL^m_xL^2_{\alpha}}T^{\beta(p)}\parenthese{\|u_1\|^{p}_{L_t^{\infty}L_x^{\frac{mp}{m-2}}H_{\alpha}^{\frac{1}{2}+\delta}}+\|u_2\|^{p}_{L_t^{\infty}L_x^{\frac{mp}{m-2}}H_{\alpha}^{\frac{1}{2}+\delta}}},
    \endaligned
\end{equation}
with $\beta(p)>0$. \vspace{3mm}

We conclude the proof of uniqueness by selecting $T$
small enough and using the Sobolev inequality  such that 
\begin{equation}
   \|v\|_{L_t^{\infty}L_x^{\frac{mp}{m-2}}H_{\alpha}^{\frac{1}{2}+\delta}} \leq \|v\|_{H^2_{x,y}}.
\end{equation}
We note that $m$ satisfies (see Lemma \ref{index1})
\begin{equation}
    2<\frac{mp}{m-2}<\frac{2d}{d-3}.
\end{equation}

\section{Morawetz estimates}\label{sec Morawetz}
In this section, we derive a Morawetz type estimate for fourth-order Schr\"odinger equations on waveguides $\R^d \times \T$. This estimate is crucial for obtaining the decay property and then the scattering for \eqref{main}. Several ideas have been combined to obtain this estimate, see \cite{MWZ,Pau2,TV2}.

For $x \in \R^d$ and $r \geq 0$, we define $Q^d (x,r)$ to be a $r$ dilation of the unit cube centered at $x$, namely,
\begin{align*}
Q^d (x,r) = x + [-r,r]^d. 
\end{align*}

\begin{proposition}[Morawetz estimates on $\R^d \times \T$]\label{Morawetz}
Let $u (t,x,\alpha) \in C(\R; H_{x,\alpha}^2 (\R^d \times \T))$ be a global solution to \eqref{maineq} with $\frac{8}{d} < p < \frac{8}{d-3}$ and $d \geq 5$.
Then  for every $r >0$, there exists $C = C(r)$ such that
\begin{align*}
\int_{\R} \parenthese{\sup_{x_0 \in \R^d} \iint_{Q^d(x_0 , r) \times \T} \abs{ u(t,x,\alpha)}^2 \, dx d\alpha}^{\frac{p+4}{2}} \, dt  \leq C  \norm{u_0}_{H_{x, \alpha}^2 (\R^d \times \T)}^4 .
\end{align*}
\end{proposition}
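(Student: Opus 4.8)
The plan is to derive this estimate from an interaction Morawetz inequality, which is quartic in $u$ (matching the $\norm{u_0}_{H^2_{x,\alpha}}^4$ on the right-hand side) and which I would compute by the tensor product method of \cite{CGT}, in the fourth-order form developed in \cite{MWZ}, together with the device of \cite{TV2} of choosing a Morawetz weight that depends only on the Euclidean variable. Concretely, I would form the tensorized object from $u(t,x_1,\alpha_1)$ and $u(t,x_2,\alpha_2)$ on $(\R^d\times\T)^2$ and define an interaction Morawetz action $M(t)$ by pairing the associated momentum densities against $\nabla a(x_1-x_2)$, where $a$ is a suitable convex weight on $\R^d$ that is, crucially, independent of $\alpha_1,\alpha_2$. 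The tensor product reduces the quartic estimate to a virial-type computation for the doubled object, which is exactly where the dimensional restriction $d\ge 5$ of \cite{MWZ} enters.

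I would then differentiate $M(t)$ in time, substitute $i\partial_t u = -\Delta_{x,\alpha}^2 u - |u|^p u$ from \eqref{maineq}, and integrate by parts in both $x$ and $\alpha$. The central algebraic difficulty, already emphasized in the introduction, is that
\[
\Delta_{x,\alpha}^2 = \Delta_x^2 + 2\,\Delta_x\Delta_\alpha + \Delta_\alpha^2
\]
carries mixed derivatives: both the cross term $2\,\Delta_x\Delta_\alpha$ and, inside $\Delta_x^2$, the genuinely mixed Euclidean derivatives $\partial_{x_i x_i x_j x_j}$. For NLS the analogous periodic terms integrate to zero against an $\alpha$-independent weight; here they do not, and they couple the periodic and Euclidean directions. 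Following the strategy of the introduction, I would not try to cancel them but rather carry every mixed and non-mixed contribution explicitly to the end of the calculation, organizing the terms by how many derivatives fall on $a$ versus on the solution.

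The step I expect to be hardest is to show that, after all integrations by parts, the surviving mixed-derivative terms carry a favorable (nonnegative) sign, so that they may be dropped from the lower bound for $\frac{d}{dt}M(t)$, or else combine with the pure $\Delta_x^2$ terms into a manifestly positive expression; convexity of $a$ together with the positivity inherent in the tensor structure should make the pure-Euclidean part reproduce the Morawetz lower bound of \cite{MWZ}, while the contributions of $2\,\Delta_x\Delta_\alpha$ and $\Delta_\alpha^2$ must be checked by direct computation to be of the right sign. In parallel, the nonlinearity produces a positive correlation term of the form $\iint w(x_1-x_2)\,|u(t,x_1,\alpha_1)|^2\,|u(t,x_2,\alpha_2)|^{p+2}$ with a nonnegative weight $w$ built from $a$. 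Restricting both points to a common cube $Q^d(x_0,r)$, on which $w\gtrsim c(r)>0$, using Hölder's inequality on $Q^d(x_0,r)\times\T$ to bound $\int|u|^{p+2}$ from below by a power of $\int|u|^2$, and then taking the supremum over $x_0$, yields a lower bound proportional to $\big(\sup_{x_0}\iint_{Q^d(x_0,r)\times\T}|u|^2\big)^{(p+4)/2}$; the exponent $(p+4)/2$ arises from combining the mass factor $|u(x_1)|^2$ (power $1$) with the nonlinear factor $|u(x_2)|^{p+2}$ (power $(p+2)/2$ after Hölder).

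Finally, I would bound the time-boundary contribution: since $\nabla a$ is bounded, Cauchy--Schwarz controls $\sup_t|M(t)|$ by products of $L^\infty_t L^2_{x,\alpha}$ and lower-order Sobolev norms of $u$, hence by $\norm{u}_{L^\infty_t H^2_{x,\alpha}}^4$. Conservation of mass and energy for \eqref{maineq}, together with the global existence in Theorem \ref{main}, gives $\norm{u}_{L^\infty_t H^2_{x,\alpha}}\lesssim\norm{u_0}_{H^2_{x,\alpha}}$. Integrating the differentiated identity over $t\in\R$, discarding the favorably signed mixed and kinetic terms, and retaining the nonlinear correlation term then gives the claimed inequality with $C=C(r)$.
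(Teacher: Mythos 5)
Your proposal matches the paper's proof in all essential respects: the tensor-product interaction Morawetz action with a weight $a$ depending only on the Euclidean variables (the paper takes $a(x,y)=\inner{x-y}$), the explicit tracking of the mixed derivatives coming from $\Delta_{x,\alpha}^2$ with a sign check showing they can be discarded, the nonlinear correlation term $\Delta_x a\,\abs{u}^{p+2}\abs{v}^2$ localized to cubes via H\"older to produce the exponent $\frac{p+4}{2}$, and the bound on $\sup_t\abs{M(t)}$ by conservation of mass and energy. This is essentially the same argument as in the paper, so no further comparison is needed.
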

\begin{remark}\label{rmk: Mora}
We underline that Proposition \ref{Morawetz} can be extended to the case that the transverse factor is any compact manifold $M^k_y$ ($k$-dimensional) instead of $\mathbb{T}$ and the flat measure $d\alpha$ is replaced by the intrinsic measure $dvol_{M^k_y}$.
\end{remark}
\begin{proof}[Proof of Proposition \ref{Morawetz}]
Let $u,v$ be solutions to the following equations respectively
\begin{align*}
( i \partial_t + \Delta_{x,\alpha}^2 ) u & = F(u),\\
( i \partial_t + \Delta_{y,\beta}^2 ) v & = F(v),
\end{align*}
where $F(u) = - \abs{u}^p u$.

For 
\begin{align*}
z : = (x,\alpha , y ,\beta) \in (\R^d \times \T) \times ( \R^d \times \T) = \{ (x,\alpha , y ,\beta): x , y \in \R^d, \alpha,\beta \in \T \} , 
\end{align*}
define the tensor product for the solutions $u$ and $v$
\begin{align*}
w : = (u  \otimes v) (t,z) = u(t,x,\alpha) v(t,y,\beta) .
\end{align*}
It is easy to verify that   $w=u  \otimes v$  satisfies the following equation
\begin{align*}
( i \partial_t + \Delta^2 ) w = F(u) \otimes v + F(v) \otimes u ,
\end{align*}
where $\Delta^2 =  \Delta_{x,\alpha}^2 + \Delta_{y,\beta}^2$.
In fact,  a direct calculation gives
\begin{align*}
( i \partial_t + \Delta^2 ) w & = ( i \partial_t + \Delta_{x,\alpha}^2 + \Delta_{y,\beta}^2 ) u(t,x,\alpha) v(t,y,\beta) \\
& = i \partial_t (uv) + (\Delta_{x,\alpha}^2 u)v + (\Delta_{y,\beta}^2 v) u\\
& = F(u)v + F(v)u = F(u) \otimes v + F(v) \otimes u .
\end{align*}

Now define the following the Morawetz action $M_a^{\otimes_2} (t) $ in the spirit of \cite{TV2} corresponding to $w=u  \otimes v$ by
\begin{align}\label{eq M}
\begin{aligned}
M_a^{\otimes_2} (t) & : = 2 \int_{(\R^d \times \T) \otimes ( \R^d \times \T)} \nabla_{x,y} a(x,y) \cdot \im [ \overline{w} (z) \nabla_{x,y} w (z) ] \, dz\\
& = 2 \int_{(\R^d \times \T) \otimes ( \R^d \times \T)} \nabla_{x,y} a(x,y) \cdot \im [ \overline{u \otimes v} (z) \nabla_{x,y} (u \otimes v) (z)] \, dz
\end{aligned}
\end{align}
where $\nabla_{x,y} = (\nabla_x, \nabla_y)$. Note that in \eqref{eq M}, $a (x,y)$ is a function independent on the torus direction $ \alpha , \beta$.

Next, we compute the derivative of $M_a^{\otimes_2} (t)$
\begin{align}\label{eq M_t}
\begin{aligned}
\partial_t M_a^{\otimes_2} (t) & = 2 \int_{(\R^d \times \T) \otimes ( \R^d \times \T)} \nabla_{x,y} a(x,y) \cdot \im [ \partial_t\overline{w} (z) \nabla_{x,y} w (z)] \, dz \\
& \quad + 2 \int_{(\R^d \times \T) \otimes ( \R^d \times \T)} \nabla_{x,y} a(x,y) \cdot \im [ \overline{w} (z)  \nabla_{x,y} \partial_t w (z)] \, dz  .
\end{aligned}
\end{align}
Using  the equation of $w$
\begin{align*}
i \partial_t \overline{w} & = \Delta^2 \overline{w} - \overline{F(u) \otimes v} -  \overline{F(v) \otimes u} = \Delta^2 \overline{w} + \abs{u}^p \overline{u} \overline{v} + \abs{v}^p \overline{u} \overline{v} ,\\
- i \partial_t w & =  \Delta^2 w - F(u) \otimes v  -  F(v) \otimes u  =  \Delta^2 w + \abs{u}^p uv + \abs{v}^p uv ,
\end{align*}
we write the imaginary parts in  \eqref{eq M_t}   as
\begin{align}\label{eq im}
\begin{aligned}
\im [\partial_t \overline{w} \partial_j^{x,y} w] & = \re [-i \partial_t \overline{w} \partial_j^{x,y} w] = - \re [ (\Delta^2 \overline{w} + \abs{u}^p \overline{u} \overline{v} + \abs{v}^p \overline{u} \overline{v} )\partial_j^{x,y} w ] ,\\
\im [\overline{w}  \partial_j^{x,y} \partial_t w] & = \re [-i \overline{w}  \partial_j^{x,y} \partial_t w] = \re [\partial_j^{x,y} (\Delta^2 w + \abs{u}^p uv + \abs{v}^p uv) \overline{w}] .
\end{aligned}
\end{align}

Then from \eqref{eq M_t} and \eqref{eq im}, the derivative of $M_a^{\otimes_2} (t)$ becomes
\begin{align*}
\partial_t M_a^{\otimes_2} (t) & = - 2 \int_{(\R^d \times \T) \otimes ( \R^d \times \T)} \partial_j^{x,y} a(x,y) \re [ (\Delta^2 w + \abs{u}^p \overline{u} \overline{v} + \abs{v}^p \overline{u} \overline{v} ) \partial_j^{x,y}  w (z)] \, dz  \\
& \quad +  2 \int_{(\R^d \times \T) \otimes ( \R^d \times \T)}  \partial_j^{x,y} a(x,y) \re [ \partial_j^{x,y} (\Delta^2 w + \abs{u}^p uv+ \abs{v}^p uv)  \overline{w} (z)] \, dz  \\
& = - 2 \int_{(\R^d \times \T) \otimes ( \R^d \times \T)} \partial_j^{x,y} a(x,y) \re [ ( \Delta_{x,y}^2 w + \abs{u}^p \overline{u} \overline{v} + \abs{v}^p \overline{u} \overline{v} ) \partial_j^{x,y}  w (z)] \, dz  \\
& \quad +  2 \int_{(\R^d \times \T) \otimes ( \R^d \times \T)}  \partial_j^{x,y} a(x,y) \re [ \partial_j^{x,y} (\Delta_{x,y}^2 w + \abs{u}^p uv+ \abs{v}^p uv)  \overline{w} (z)] \, dz  \\
& \quad - 2 \int_{(\R^d \times \T) \otimes ( \R^d \times \T)} \partial_j^{x,y} a(x,y) \re [ (\Delta^2 -\Delta_{x,y}^2)  w  \partial_j^{x,y}  w (z)] \, dz  \\
& \quad +  2 \int_{(\R^d \times \T) \otimes ( \R^d \times \T)}  \partial_j^{x,y} a(x,y) \re [ \partial_j^{x,y} (\Delta^2 -\Delta_{x,y}^2) w  \overline{w} (z)] \, dz  \\
& =: M_1 + M_2 + M_3 + M_4.
\end{align*}
Note here when writing $\partial_j   f \partial_j   g$, we mean the Einstein summation convention $\sum_j \partial_j   f \partial_j   g$, and we will be constantly using this convention in the rest of calculation in this section. 
Also notice that  in the last step, we split the bi-Laplacian into the Euclidean directions $\Delta_{x,y}^2$ and the mixed/non-Euclidean directions $\Delta^2 -\Delta_{x,y}^2$. The reason why we separate the Euclidean directions is that we would like to used the available Morawetz estimates in \cite{MWZ} to take care of   $M_1 + M_2$. That is, 
\begin{align*}
M_1 + M_2  & = 2 \re  \int_{\T  \otimes \T}  \int_{\R^d  \otimes \R^d} -\frac{1}{2} (\Delta_x^3 a \abs{u}^2 \abs{v}^2+ \Delta_y^3 a \abs{u}^2 \abs{v}^2) + (\Delta_x^2 a \abs{\nabla_x u}^2 \abs{v}^2 + \Delta_y^2 a \abs{\nabla_y v}^2 \abs{u}^2) \\
& \quad + 2 (\partial_{jk}^x \Delta_x a \,  \partial_j^x \overline{u} \,  \partial_k^x u \abs{v}^2 + \partial_{jk}^y \Delta_y a \,  \partial_j^y \overline{v} \, \partial_k^y v \abs{u}^2) - 4 (\partial_{jk}^x a \, \partial_{ij}^x \overline{u} \, \partial_{ik}^x u \abs{v}^2 +  \partial_{jk}^y a \, \partial_{ij}^y \overline{v} \, \partial_{ik}^y v  \abs{u}^2) \, dz \\
& \quad  - 2   \int_{\T  \otimes \T} \int_{\R^d  \otimes \R^d}  \partial_j^{x,y} a(x,y)   \{ \abs{u}^p uv + \abs{v}^p uv, w \}_p^j \, dz 
\end{align*}
where the Poisson bracket with its derivatives are defined as follows
\begin{align*}
\{ f,g\}_p & = \re  [f \nabla \overline{g} - g \nabla \overline{f}],\\
\{ f,g\}_p^j & = \re  [f \partial_j^{x,y} \overline{g} - g \partial_j^{x,y} \overline{f}].
\end{align*}

Next, we focus on the  terms $M_3$ and $M_4$ with derivatives in torus directions. 

For $M_3$, thanks to the symmetry in $x,y$ and $\alpha,\beta$, we consider the following two cases: $\partial_{x_i x_i \alpha \alpha}$ and $\partial_{\alpha \alpha \alpha \alpha}$.

Preforming integration by parts, we obtain the contribution of $\partial_{x_i x_i \alpha \alpha}$ in $M_3$
\begin{align*}
M_{3a}  & : = - \int_{(\R^d \times \T) \otimes ( \R^d \times \T)}  \partial_j^{x,y} a(x,y) \re [\partial_{x_i x_i \alpha \alpha} \overline{w} \, \partial_j^{x,y} w] \, dz \\
& =  \re \int_{(\R^d \times \T) \otimes ( \R^d \times \T)}  \partial_j^{x,y} a(x,y) \, \partial_{x_i x_i \alpha } \overline{w} \, \partial_j^{x,y} \partial_{  \alpha } w \, dz \\
& = - \re \int_{(\R^d \times \T) \otimes ( \R^d \times \T)}  \partial_{x_i \alpha} \overline{w} \, \partial_{x_i} (\partial_j^{x,y} a(x,y) \, \partial_j^{x,y} \partial_{\alpha} w) \, dz\\
& = - \re \int_{(\R^d \times \T) \otimes ( \R^d \times \T)}  \partial_{x_i \alpha} \overline{w} \, \partial_{x_i} \partial_j^{x,y} a(x,y) \, \partial_j^{x,y} \partial_{\alpha} w \, dz - \re \int_{(\R^d \times \T) \otimes ( \R^d \times \T)}  \partial_{x_i \alpha} \overline{w} \, \partial_j^{x,y} a(x,y) \, \partial_j^{x,y} \partial_{x_i \alpha} w \, dz\\
& = - \re \int_{(\R^d \times \T) \otimes ( \R^d \times \T)}  \partial_{x_i \alpha} \overline{w} \, \partial_{x_i} \partial_j^{x,y} a(x,y) \, \partial_j^{x,y} \partial_{\alpha} w \, dz - \frac{1}{2}  \int_{(\R^d \times \T) \otimes ( \R^d \times \T)}   \partial_j^{x,y} a(x,y) \, \partial_j^{x,y} \abs{ \partial_{x_i \alpha} w }^2 \, dz\\
& =- \re \int_{(\R^d \times \T) \otimes ( \R^d \times \T)}  \partial_{x_i \alpha} \overline{w} \, \partial_{x_i} \partial_j^{x,y} a(x,y) \, \partial_j^{x,y} \partial_{\alpha} w \, dz + \frac{1}{2}  \int_{(\R^d \times \T) \otimes ( \R^d \times \T)}   \partial_{jj}^{x,y} a(x,y)  \abs{ \partial_{x_i \alpha} w }^2 \, dz\\
& =  - \re \int_{(\R^d \times \T) \otimes ( \R^d \times \T)}  \partial_{x_i \alpha} \overline{w} \, \partial_{x_i} \partial_j^{x,y} a(x,y) \, \partial_j^{x,y} \partial_{\alpha} w \, dz + \frac{1}{2}  \int_{(\R^d \times \T) \otimes ( \R^d \times \T)}   \Delta_{x,y}^2 a(x,y)  \abs{ \partial_{x_i \alpha} w }^2 \, dz .
\end{align*}
And the contribution of $\partial_{\alpha \alpha \alpha \alpha}$ in $M_3$ is given by
\begin{align*}
M_{3b}& := - \int_{(\R^d \times \T) \otimes ( \R^d \times \T)}  \partial_j^{x,y} a(x,y) \re [\partial_{\alpha \alpha \alpha \alpha} \overline{w} \, \partial_j^{x,y} w] \, dz\\
& = - \re \int_{(\R^d \times \T) \otimes ( \R^d \times \T)}  \partial_j^{x,y} a(x,y) \, \partial_{\alpha \alpha } \overline{w} \, \partial_j^{x,y} \partial_{\alpha \alpha } w \, dz \\
& = - \frac{1}{2} \int_{(\R^d \times \T) \otimes ( \R^d \times \T)}  \partial_j^{x,y} a(x,y)  \, \partial_j^{x,y}  \abs{\partial_{\alpha \alpha} w}^2 \, dz \\
& = \frac{1}{2} \int_{(\R^d \times \T) \otimes ( \R^d \times \T)}  \partial_{jj}^{x,y} a(x,y)   \abs{\partial_{\alpha \alpha} w}^2 \, dz \\
& =  \frac{1}{2} \int_{(\R^d \times \T) \otimes ( \R^d \times \T)}  \Delta_{x,y}^2 a(x,y)  \abs{ \partial_{\alpha \alpha} w }^2 \, dz .
\end{align*}

Similarly, for $M_4$, we also only consider the following two cases: $\partial_{x_i x_i \alpha \alpha}$ and $\partial_{\alpha \alpha \alpha \alpha}$. The contribution of $\partial_{x_i x_i \alpha \alpha}$ to $M_4$ can be written as
\begin{align*}
M_{4a} & := \int_{(\R^d \times \T) \otimes ( \R^d \times \T)}  \partial_j^{x,y} a(x,y) \re [ \partial_j^{x,y} \partial_{x_i x_i \alpha \alpha} w \, \overline{w}] \, dz \\
& = - \re \int_{(\R^d \times \T) \otimes ( \R^d \times \T)}  \partial_j^{x,y} a(x,y) \, \partial_j^{x,y} \partial_{x_i x_i  \alpha} w \, \partial_{\alpha} \overline{w} \, dz \\
& = \re \int_{(\R^d \times \T) \otimes ( \R^d \times \T)}  \partial_j^{x,y} \partial_{ x_i  \alpha} w \, \partial_{x_i} (\partial_j^{x,y} a(x,y) \,  \partial_{\alpha} \overline{w} ) \, dz \\
& = \re \int_{(\R^d \times \T) \otimes ( \R^d \times \T)}  \partial_j^{x,y} \partial_{ x_i  \alpha} w \, \partial_j^{x,y} \partial_{x_i}  a(x,y)  \, \partial_{\alpha} \overline{w}  \, dz +  \re \int_{(\R^d \times \T) \otimes ( \R^d \times \T)}  \partial_j^{x,y} \partial_{ x_i  \alpha} w \, \partial_j^{x,y} a(x,y) \,  \partial_{x_i \alpha} \overline{w}  \, dz \\
& = \re \int_{(\R^d \times \T) \otimes ( \R^d \times \T)}  \partial_j^{x,y} \partial_{ x_i  \alpha} w \, \partial_j^{x,y} \partial_{x_i}  a(x,y) \,  \partial_{\alpha} \overline{w}  \, dz  +  \frac{1}{2} \int_{(\R^d \times \T) \otimes ( \R^d \times \T)}  \partial_j^{x,y} a(x,y) \, \partial_j^{x,y}  \abs{\partial_{x_i \alpha} w}^2 \, dz \\
& = \re \int_{(\R^d \times \T) \otimes ( \R^d \times \T)}  \partial_j^{x,y} \partial_{ x_i  \alpha} w \, \partial_j^{x,y} \partial_{x_i}  a(x,y) \,  \partial_{\alpha} \overline{w}  \, dz  - \frac{1}{2} \int_{(\R^d \times \T) \otimes ( \R^d \times \T)}  \Delta_{x,y}^2 a(x,y) \abs{\partial_{x_i \alpha} w}^2 \, dz ,
\end{align*}
while the contribution of  $\partial_{\alpha \alpha \alpha \alpha}$ to $M_4$ is 
\begin{align*}
M_{4b} & := \int_{(\R^d \times \T) \otimes ( \R^d \times \T)}  \partial_j^{x,y} a(x,y) \re [\partial_j^{x,y} \partial_{\alpha \alpha \alpha \alpha} w \, \overline{w}] \, dz\\
& = \re \int_{(\R^d \times \T) \otimes ( \R^d \times \T)}  \partial_j^{x,y} a(x,y) \, \partial_j^{x,y} \partial_{\alpha \alpha} w \, \partial_{\alpha \alpha} \overline{w} \, dz\\
& = \frac{1}{2} \int_{(\R^d \times \T) \otimes ( \R^d \times \T)}    \partial_j^{x,y} a(x,y)   \, \partial_j^{x,y} \abs{\partial_{\alpha \alpha} w}^2 \, dz \\
& = - \frac{1}{2} \int_{(\R^d \times \T) \otimes ( \R^d \times \T)}  \partial_{jj}^{x,y} a(x,y)   \abs{\partial_{\alpha \alpha} w}^2 \, dz \\
& = - \frac{1}{2} \int_{(\R^d \times \T) \otimes ( \R^d \times \T)}  \Delta_{x,y}^2 a(x,y)  \abs{ \partial_{\alpha \alpha} w }^2 \, dz .
\end{align*}

Now gathering the computation above and integrating by parts, we have
\begin{align*}
& \quad M_3 + M_4= M_{3a} + M_{3b} + M_{4a} +M_{4b} \\
& = - \re \int_{(\R^d \times \T) \otimes ( \R^d \times \T)}  \partial_{x_i \alpha} \overline{w} \, \partial_{x_i} \partial_j^{x,y} a(x,y) \, \partial_j^{x,y} \partial_{\alpha} w \, dz + \re \int_{(\R^d \times \T) \otimes ( \R^d \times \T)}  \partial_j^{x,y} \partial_{ x_i  \alpha} w \, \partial_j^{x,y} \partial_{x_i}  a(x,y) \,  \partial_{\alpha} \overline{w}  \, dz  \\
& = - \re \int_{(\R^d \times \T) \otimes ( \R^d \times \T)}  \partial_{x_i \alpha} \overline{w} \, \partial_{x_i} \partial_j^{x,y} a(x,y) \, \partial_j^{x,y} \partial_{\alpha} w \, dz  - \re \int_{(\R^d \times \T) \otimes ( \R^d \times \T)}  \partial_{x_i} (\partial_j^{x,y} \partial_{x_i}  a(x,y) \,  \partial_{\alpha} \overline{w} ) \, \partial_j^{x,y} \partial_{ \alpha} w   \, dz  \\
& = - 2 \re \int_{(\R^d \times \T) \otimes ( \R^d \times \T)}  \partial_{x_i \alpha} \overline{w} \, \partial_{x_i} \partial_j^{x,y} a(x,y) \, \partial_j^{x,y} \partial_{\alpha} w \, dz  - \re \int_{(\R^d \times \T) \otimes ( \R^d \times \T)}  \partial_j^{x,y} \partial_{x_i x_i}  a(x,y)  \, \partial_{\alpha} \overline{w} \,  \partial_j^{x,y} \partial_{ \alpha} w   \, dz  \\
& =: I + II.
\end{align*}
Another integration by parts on $II$ gives
\begin{align*}
II & = - \re \int_{(\R^d \times \T) \otimes ( \R^d \times \T)}  \partial_j^{x,y} \partial_{x_i x_i}  a(x,y) \,  \partial_{\alpha} \overline{w} \,  \partial_j^{x,y} \partial_{ \alpha} w   \, dz  \\
& = \re \int_{(\R^d \times \T) \otimes ( \R^d \times \T)}  \partial_{jj}^{x,y} \partial_{x_i x_i}  a(x,y) \,  \partial_{\alpha} \overline{w} \,   \partial_{ \alpha} w   \, dz  + \re \int_{(\R^d \times \T) \otimes ( \R^d \times \T)}  \partial_j^{x,y} \partial_{x_i x_i}  a(x,y) \,  \partial_{\alpha} w \,  \partial_j^{x,y} \partial_{ \alpha} \overline{w}   \, dz \\
& = \re \int_{(\R^d \times \T) \otimes ( \R^d \times \T)}  \partial_{jj}^{x,y} \partial_{x_i x_i}  a(x,y) \,  \partial_{\alpha} \overline{w} \,   \partial_{ \alpha} w   \, dz  - II ,
\end{align*}
which yields
\begin{align*}
II = \frac{1}{2} \re \int_{(\R^d \times \T) \otimes ( \R^d \times \T)}  \partial_{jj}^{x,y} \partial_{x_i x_i}  a(x,y) \,  \partial_{\alpha} \overline{w} \,   \partial_{ \alpha} w   \, dz =   \int_{(\R^d \times \T) \otimes ( \R^d \times \T)}  \Delta_x^2   a(x,y)  \abs{\partial_{\alpha} w}^2 +  \Delta_y^2   a(x,y)  \abs{\partial_{\beta} w}^2   \, dz .
\end{align*}

Splitting  $\partial_j^{x,y}$ into $\partial_j^{x}$ and $\partial_j^{y}$ and integrating by parts, we obtain
\begin{align*}
I & = - 2 \re \int_{(\R^d \times \T) \otimes ( \R^d \times \T)}  \partial_{x_i \alpha} \overline{w} \, \partial_{x_i} \partial_j^{x,y} a(x,y) \, \partial_j^{x,y} \partial_{\alpha} w \, dz \\
& =  - 2 \re \int_{(\R^d \times \T) \otimes ( \R^d \times \T)}  \partial_{x_i \alpha} \overline{w} \, \partial_{x_i x_j} a(x,y) \, \partial_{x_j \alpha} w \, dz   - 2 \re \int_{(\R^d \times \T) \otimes ( \R^d \times \T)}  \partial_{x_i \alpha} \overline{w} \, \partial_{x_i y_j} a(x,y) \, \partial_{y_j \alpha} w \, dz \\
& = - 2 \re \int_{(\R^d \times \T) \otimes ( \R^d \times \T)}  \partial_{x_i \alpha} \overline{u} \, \partial_{x_j \alpha} u \, \partial_{x_i x_j} a(x,y)  \abs{v}^2 \, dz - 2 \re \int_{(\R^d \times \T) \otimes ( \R^d \times \T)}  \partial_{x_i \alpha} \overline{u} \overline{v} \, \partial_{x_i x_j} a(x,y) \, \partial_{y_j } v \, \partial_{\alpha} u \, dz \\
& = - 2 \re \int_{(\R^d \times \T) \otimes ( \R^d \times \T)}  \partial_{x_i \alpha} \overline{u} \, \partial_{x_j \alpha} u \, \partial_{x_i x_j} a(x,y)  \abs{v}^2 \, dz - \frac{1}{2} \int_{(\R^d \times \T) \otimes ( \R^d \times \T)}  \partial_{x_i } \abs{\partial_{\alpha} u}^2 \,   \partial_{x_i x_j} a(x,y) \, \partial_{y_j } \abs{v}^2  \, dz \\
& = - 2 \re \int_{(\R^d \times \T) \otimes ( \R^d \times \T)}  \partial_{x_i \alpha} \overline{u} \, \partial_{x_j \alpha} u \, \partial_{x_i x_j} a(x,y)  \abs{v}^2 \, dz + \frac{1}{2} \int_{(\R^d \times \T) \otimes ( \R^d \times \T)}   \abs{\partial_{\alpha} u}^2   \partial_{x_i x_i x_j x_j} a(x,y) \abs{v}^2  \, dz \\
& = - 2 \re \int_{(\R^d \times \T) \otimes ( \R^d \times \T)}  \partial_{x_i \alpha} \overline{u} \, \partial_{x_j \alpha} u \, \partial_{x_i x_j} a(x,y)  \abs{v}^2 \, dz + \frac{1}{2} \int_{(\R^d \times \T) \otimes ( \R^d \times \T)}     \Delta_x^2 a(x,y)  \abs{\partial_{\alpha} w}^2 +   \Delta_y^2 a(x,y)  \abs{\partial_{\beta} w}^2  \, dz 
\end{align*}

Therefore,
\begin{align*}
& \quad M_3 + M_4 = I + II\\
& = - 2 \re \int_{(\R^d \times \T) \otimes ( \R^d \times \T)}  \partial_{x_i \alpha} \overline{u} \, \partial_{x_j \alpha} u \, \partial_{ij}^x a(x,y)  \abs{v}^2 \, dz - 2 \re \int_{(\R^d \times \T) \otimes ( \R^d \times \T)}  \partial_{y_i \beta} \overline{v} \, \partial_{y_j \beta} v \, \partial_{ij}^y a(x,y)  \abs{u}^2 \, dz \\
& \quad +  \frac{3}{2} \int_{(\R^d \times \T) \otimes ( \R^d \times \T)}     \Delta_x^2 a(x,y)  \abs{\partial_{\alpha} w}^2  \, dz +  \frac{3}{2} \int_{(\R^d \times \T) \otimes ( \R^d \times \T)}     \Delta_y^2 a(x,y)  \abs{\partial_{\beta} w}^2  \, dz  . 
\end{align*}

Finally putting all the computation together, we obtain the  derivative of $M_a^{\otimes_2} (t)$  in the following form
\begin{align}
& \quad \partial_t M_a^{\otimes_2} (t)  = M_1 + M_2 + M_3 + M_4 \label{eq M_t full}\\
& =  2 \re  \int_{\T  \otimes \T}  \int_{\R^d  \otimes \R^d} -\frac{1}{2} (\Delta_x^3 a \abs{u}^2 \abs{v}^2+ \Delta_y^3 a \abs{u}^2 \abs{v}^2) + (\Delta_x^2 a \abs{\nabla_x u}^2 \abs{v}^2 + \Delta_y^2 a \abs{\nabla_y v}^2 \abs{u}^2) \label{eq M1}\\
&  + 2 (\partial_{jk}^x \Delta_x a \, \partial_j^x \overline{u} \, \partial_k^x u \abs{v}^2 + \partial_{jk}^y \Delta_y a \, \partial_j^y \overline{v} \, \partial_k^y v \abs{u}^2) - 4 (\partial_{jk}^x a \, \partial_{ij}^x \overline{u} \, \partial_{ik}^x u \abs{v}^2 +  \partial_{jk}^y a \, \partial_{ij}^y \overline{v} \, \partial_{ik}^y v  \abs{u}^2) \, dz \label{eq M2}\\
&  - 2   \int_{\T  \otimes \T} \int_{\R^d  \otimes \R^d}  \partial_j^{x,y} a(x,y)   \{ \abs{u}^p uv + \abs{v}^p uv, w \}_p^j \, dz  \label{eq M3}\\
&   - 2 \re \int_{(\R^d \times \T) \otimes ( \R^d \times \T)}  \partial_{x_i \alpha} \overline{u} \, \partial_{x_j \alpha} u \, \partial_{ij}^x a(x,y)  \abs{v}^2 \, dz - 2 \re \int_{(\R^d \times \T) \otimes ( \R^d \times \T)}  \partial_{y_i \beta} \overline{v} \, \partial_{y_j \beta} v \, \partial_{ij}^y a(x,y)  \abs{u}^2 \, dz  \label{eq M4}\\
&  +  \frac{3}{2} \int_{(\R^d \times \T) \otimes ( \R^d \times \T)}     \Delta_x^2 a(x,y)  \abs{\partial_{\alpha} w}^2  \, dz +  \frac{3}{2} \int_{(\R^d \times \T) \otimes ( \R^d \times \T)}     \Delta_y^2 a(x,y)  \abs{\partial_{\beta} w}^2  \, dz   \label{eq M5}.
\end{align}

After finishing the  derivative of Morawetz action, we choose   $a (x,y ) = \inner{x-y} : = (1 + \abs{x-y})^{\frac{1}{2}}$ in \eqref{eq M_t full}.
Now we are   one step away from a Morawetz type inequality, that is, determining the signs of every single term in \eqref{eq M_t full}. In order to hold the inequality, they have to be with right signs, and our goal is to keep only the first term in \eqref{eq M1} and \eqref{eq M3} and drop all other terms.

A direct calculation gives us the following derivatives of the chosen function $a(x,y)$:
\begin{align}\label{eq list}
\begin{aligned}
& \Delta_x a  = \Delta_y a= \frac{d-1} { \inner{x-y}} + \frac{1}{\inner{x-y}^3},\\
&  \partial_{ij}^x a = \partial_{ij}^y a = \frac{\delta_{ij}}{\inner{x-y}} - \frac{(x-y)_i (x-y)_j }{\inner{x-y}^{3}},\\
& \Delta_x^2 a = \Delta_y^2 a  = - \frac{(d-1) (d-3) }{\inner{x-y}^{3}} - \frac{6 (d-3) }{\inner{x-y}^{5}} - \frac{15}{ \inner{x-y}^{7}},\\
& \Delta_x^3 a = \Delta_y^3 a  = \frac{3 (d-1) (d-3) (d-5) }{\inner{x-y}^{5}} + \frac{45 (d-3) (d-5)}{\inner{x-y}^{7}} + \frac{315 (d-5)}{\inner{x-y}^{9}} + \frac{945 }{\inner{x-y}^{11}},\\
& \partial_{ij}^x \Delta_x a = \partial_{ij}^y \Delta_y a  = -\frac{(d-1) \delta_{ij} }{\inner{x-y}^{3}} + \frac{3 (d-3) (x-y)_i (x-y)_j}{\inner{x-y}^{5}} - \frac{3 \delta_{ij} }{\inner{x-y}^{5}} + \frac{15 (x-y)_i (x-y)_j }{\inner{x-y}^{7}}.
\end{aligned}
\end{align}

For any vector $e \in \R^d$, and a function $u$, we define
\begin{align*}
& \nabla_e u  = (e \cdot \nabla u) \frac{e}{\abs{e}^2} ,\\
& \nabla_e^{\perp} u = \nabla u - \nabla_e u .
\end{align*}
In the following calculation, we set $e = x-y$.

First we consider the term $M_1 + M_2$, which  from the contribution of Euclidean bi-Laplacian $\Delta_{x,y}^2$. For $d \geq 3$, we have the bounds for the second term in \eqref{eq M1}
\begin{align*}
\Delta_x^2 a \abs{\nabla_x u}^2 & = \parenthese{- \frac{(d-1) (d-3) }{\inner{x-y}^{3}} - \frac{6 (d-3) }{\inner{x-y}^{5}} - \frac{15}{ \inner{x-y}^{7}} } \abs{\nabla_x u}^2 \\
& = \parenthese{- \frac{(d+5) (d-3)}{\inner{x-y}^{5}} - \frac{ (d-1) (d-3)\abs{x-y}^2}{ \inner{x-y}^{5}} -\frac{15 }{\inner{x-y}^{7}}} \abs{\nabla_x u}^2 \\
& \leq - \frac{(d+5) (d-3) \abs{\nabla_e u}^2 }{\inner{x-y}^{5}} ,
\end{align*}
and for the first term in \eqref{eq M2}
\begin{align*}
\partial_{jk}^x \Delta_x a \, \partial_j^x \overline{u} \, \partial_k^x u & = \parenthese{-\frac{(d-1) \delta_{ij} }{\inner{x-y}^{3}} + \frac{3 (d-3) (x-y)_i (x-y)_j}{\inner{x-y}^{5}} - \frac{3 \delta_{ij} }{\inner{x-y}^{5}} + \frac{15 (x-y)_i (x-y)_j }{\inner{x-y}^{7}}}  \partial_j^x \overline{u} \, \partial_k^x u \\
& \leq - \frac{(d-1)\abs{\nabla_x u}^2 }{\inner{x-y}^{3}}  +  \frac{3(d-1)\abs{x-y}^2\abs{\nabla_e u}^2}{\inner{x-y}^5}  - \frac{3\abs{\nabla_x u}^2}{\inner{x-y}^5} +  \frac{15 \abs{x-y}^2 \abs{\nabla_e u}^2}{\inner{x-y}^7} + \frac{12 \abs{\nabla_e u}^2}{\inner{x-y}^5}\\
& \leq \frac{2(d-1)}{\inner{x-y}^3} - \frac{3(d+3) \abs{\nabla_e u}^2}{\inner{x-y}^5} , 
\end{align*}
and also for the second term in \eqref{eq M2}
\begin{align*}
- \partial_{jk}^x \Delta_x a \, \partial_{ij}^x \overline{u} \, \partial_{ik}^x u & = - \parenthese{\frac{\delta_{ij}}{\inner{x-y}} - \frac{(x-y)_i (x-y)_j }{\inner{x-y}^{3}}} \partial_{ij}^x \overline{u} \, \partial_{ik}^x u \\
& = - \frac{\abs{\nabla_x \partial_i^x u}}{\inner{x-y}} + \frac{\abs{x-y}^2 \abs{\nabla_e \partial_i^x u}^2}{\inner{x-y}^3}  \\
& \leq - \frac{\abs{\nabla_x \partial_i^x u}^2 - \abs{\nabla_e \partial_i^x u}^2}{\inner{x-y}} \\
& \leq  - \frac{(d-1)\abs{\nabla_e u}^2}{\abs{x-y}^2 \inner{x-y}} \leq -\frac{(d-1)\abs{\nabla_e u}^2}{\inner{x-y}^3} .
\end{align*}
In the last line above, we used the following inequality as shown in Levandosky-Strauss  \cite{LS} (it is also used in \cite{Pau2}) 
\begin{align*}
\sum_i (\abs{\nabla_x \partial_i^x u}^2 - \abs{\nabla_e \partial_i^x u}^2) \geq \frac{d-1}{\abs{x-y}^2} \abs{\nabla_e u}^2 .
\end{align*}

Collecting all the calculation above, we know that  the integrand in the `to be dropped' term in \eqref{eq M1}, \eqref{eq M2} is bounded by 
\begin{align*}
\parenthese{\Delta_x^2 a \abs{\nabla_x u}^2 + 2 \partial_{jk}^x \Delta_x a \, \partial_j^x \overline{u} \, \partial_k^x u  - 4 \partial_{jk}^x a \, \partial_{ij}^x \overline{u} \, \partial_{ik}^x u } \abs{v}^2 & \leq \square{-6 (d+3) - (d+5)(d-3)} \frac{\abs{\nabla_e u }^2 }{\inner{x-y}^5} \abs{v}^2 \\
& = (-d^2 -8d +45) \frac{\abs{\nabla_e u }^2 }{\inner{x-y}^5} \abs{v}^2 \leq 0 .
\end{align*}
Here notice that in the last inequality we used $-d^2 -8d +45 \leq 0$ when $d \geq 4$.

Therefore, we require $d \geq 4$ to have the right sign, hence
\begin{align*}
& \quad M_1 + M_2  \\
& \leq 2 \re  \int_{\T  \otimes \T}  \int_{\R^d  \otimes \R^d} -\frac{1}{2} (\Delta_x^3 a \abs{u}^2 \abs{v}^2+ \Delta_y^3 a \abs{u}^2 \abs{v}^2)  \, dz - 2   \int_{\T  \otimes \T} \int_{\R^d  \otimes \R^d}  \partial_j^{x,y} a(x,y)   \{ \abs{u}^p uv + \abs{v}^p uv, w \}_p^j \, dz  .
\end{align*}

Next, we turn to the term $M_3 + M_4$. Because of the symmetry in $x,y$ and $\alpha , \beta$, we only consider the first term in \eqref{eq M4} and the first term in \eqref{eq M5}. Using the list \eqref{eq list}, we write the first term in \eqref{eq M4} as 
\begin{align*}
- \partial_{x_i \alpha} \overline{u} \, \partial_{x_j \alpha} u \, \partial_{ij}^x a(x,y) & = -  \partial_{x_i \alpha} \overline{u} \, \partial_{x_j \alpha} u \parenthese{\frac{\delta_{ij}}{\inner{x-y}} - \frac{(x-y)_i (x-y)_j }{\inner{x-y}^{3}} } \\
& = - \frac{\abs{\nabla_x \partial_{\alpha} u}^2}{\inner{x-y}} + \frac{\abs{x-y}^2 \abs{\nabla_e \partial_{\alpha} u}^2}{\inner{x-y}^3} \\
& = - \frac{\abs{\nabla_x \partial_{\alpha} u}^2}{\inner{x-y}^3} - \frac{\abs{x-y}^2 \abs{\nabla_e^{\perp} \partial_{\alpha} u}^2}{\inner{x-y}^3} \leq 0 ,
\end{align*}
and the first term in \eqref{eq M5}  as
\begin{align*}
\Delta_x^2 a \abs{\partial_{\alpha} w}^2 = \parenthese{- \frac{(d-1) (d-3) }{\inner{x-y}^{3}} - \frac{6 (d-3) }{\inner{x-y}^{5}} - \frac{15}{ \inner{x-y}^{7}}}  \abs{\partial_{\alpha} w}^2 \leq 0 . 
\end{align*}

Thanks to the right signs in \eqref{eq M4} and \eqref{eq M5}, we obtain
\begin{align*}
M_3 + M_4 \leq 0 .
\end{align*}
Then,
\begin{align*}
\partial_t M_a^{\otimes_2} (t) & = M_1 + M_2 + M_3 + M_4 \\
& \leq 2 \re  \int_{\T  \otimes \T}  \int_{\R^d  \otimes \R^d} -\frac{1}{2} (\Delta_x^3 a \abs{u}^2 \abs{v}^2+ \Delta_y^3 a \abs{u}^2 \abs{v}^2)  \, dz \\
& \quad - 2   \int_{\T  \otimes \T} \int_{\R^d  \otimes \R^d}  \partial_j^{x,y} a(x,y)   \{ \abs{u}^p uv + \abs{v}^p uv, w \}_p^j \, dz  .
\end{align*}
Noticing that when $d \geq 5$, $\Delta_x^3 a = \Delta_y^3 a \geq 0$, we have
\begin{align}\label{eq MM}
 \int_0^T   \int_{\T  \otimes \T} \int_{\R^d  \otimes \R^d}  \partial_j^{x,y} a(x,y)   \{ \abs{u}^p uv + \abs{v}^p uv, w \}_p^j \, dz dt  \lesssim \sup_{t \in [0,T]} \abs{M_a^{\otimes_2} (t)}. 
\end{align}
Preforming interrogation by parts, we write
\begin{align}\label{eq MM1}
& \quad  \int_0^T   \int_{\T  \otimes \T} \int_{\R^d  \otimes \R^d}  \partial_j^{x,y} a(x,y)   \{ \abs{u}^p uv + \abs{v}^p uv, w \}_p^j \, dz dt\\
& = (1-\frac{2}{p+2}) \int_0^T   \int_{\T  \otimes \T} \int_{\R^d  \otimes \R^d} \Delta_x^2 a \abs{u}^{p+2} \abs{v}^2 + \Delta_y^2 a \abs{v}^{p+2} \abs{u}^2 \, dz dt.
\end{align}
In fact, due to the symmetry in $x$ and $y$, we only consider the following terms in the Poisson bracket. Noticing $\partial_j^{x} (\abs{u}^2) =  u \, \partial_j^{x} \overline{u}+ \overline{u}  \, \partial_j^{x} u  $ and $\abs{u}^p \partial_j^{x} (\abs{u}^2)  = \frac{2}{p+2} \partial_j^{x} (\abs{u}^{p+2})$, and using integration  by parts, we write
\begin{align*}
& \quad   \re  \int_0^T   \int_{\T  \otimes \T} \int_{\R^d  \otimes \R^d}  \partial_j^{x} a(x,y)    [\abs{u}^p uv \,  \partial_j^{x} ( \overline{u v} ) -   u v \, \partial_j^{x}  (\abs{u}^p \overline{u v})] \, dz dt \\
& =  \re  \int_0^T   \int_{\T  \otimes \T} \int_{\R^d  \otimes \R^d} \partial_j^{x} a(x,y)     \abs{u}^p  \abs{v}^2  u \, \partial_j^{x}  \overline{u }    + \partial_j^{x} a(x,y) \abs{u}^p  \abs{v}^2  \overline{u } \,  \partial_j^{x} u  + \Delta_x a(x,y) \abs{u}^{p+2} \abs{v}^2 \, dzdt \\
& =   \int_0^T   \int_{\T  \otimes \T} \int_{\R^d  \otimes \R^d}   \partial_{j}^{x} a(x,y) \abs{u}^{p} \abs{v}^2 \partial_j^x (\abs{u}^2) +   \Delta_x a(x,y)  \abs{u}^{p+2} \abs{v}^2 \, dzdt \\
& =    \int_0^T   \int_{\T  \otimes \T} \int_{\R^d  \otimes \R^d}  \frac{2}{p+2} \partial_{j}^{x} a(x,y) \abs{v}^2 \partial_j^x ( \abs{u}^{p+2}) +   \Delta_x a(x,y)  \abs{u}^{p+2} \abs{v}^2 \, dzdt \\
& =    \int_0^T   \int_{\T  \otimes \T} \int_{\R^d  \otimes \R^d} - \frac{2}{p+2} \Delta_x a(x,y)  \abs{u}^{p+2} \abs{v}^2+  \Delta_x a(x,y)  \abs{u}^{p+2} \abs{v}^2 \, dzdt \\
& =  \int_0^T   \int_{\T  \otimes \T} \int_{\R^d  \otimes \R^d} (1- \frac{2}{p+2}) \Delta_x a(x,y)  \abs{u}^{p+2} \abs{v}^2  \, dzdt ,
\end{align*}
and
\begin{align*}
\re  \int_0^T   \int_{\T  \otimes \T} \int_{\R^d  \otimes \R^d}  \partial_j^{x} a(x,y)    ( \abs{v}^p uv \,  \partial_j^{x} ( \overline{u v} ) -   u v \, \partial_j^{x}  (\abs{v}^p \overline{u v})) \, dz dt  =0 .
\end{align*}

Therefore \eqref{eq MM} with  \eqref{eq MM1}  implies
\begin{align*}
\int_0^T   \int_{\T  \otimes \T} \int_{\R^d  \otimes \R^d} \Delta_x^2 a \abs{u}^{p+2} \abs{v}^2 + \Delta_y^2 a \abs{v}^{p+2} \abs{u}^2 \, dz dt \lesssim \sup_{t \in [0,T]} \abs{M_a^{\otimes_2} (t)}  . 
\end{align*}
Taking $u=v$ in the Morawetz action \eqref{eq M} and combining   the upper bound for $M_a^{\otimes_2} (t)$
\begin{align*}
\sup_{t \in [0,T]} \abs{M_a^{\otimes_2} (t)} \lesssim \norm{u_0}_{H_{x, \alpha}^1 (\R^d \times \T)}^4 \lesssim \norm{u_0}_{H_{x, \alpha}^2 (\R^d \times \T)}^4 , 
\end{align*}
we obtain finally a Morawetz estimate that we desire
\begin{align*}
\int_0^T   \int_{\T  \otimes \T} \int_{\R^d  \otimes \R^d} \Delta_x^2 a \abs{u}^{p+2} \abs{v}^2 + \Delta_y^2 a \abs{v}^{p+2} \abs{u}^2 \, dz dt  \lesssim \norm{u_0}_{H_{x, \alpha}^2 (\R^d \times \T)}^4 .
\end{align*}

However, we prefer to not have the function $a$ appearing in the estimate.  Recall that  $Q^d (x,r)$ is a $r$ dilation of the unit cube centered at $x$,
\begin{align*}
Q^d (x,r) = x + [-r,r]^d. 
\end{align*}
Then we know  $\inf_{Q^d (0, 2r)} \Delta_x (\inner{x}) >0$, and 
\begin{align*}
\int_{\R} \sup_{x_0 \in \R^d} \parenthese{ \iint_{(Q^d (x_0 , r))^2 \otimes \T^2} \abs{u(t, x, \alpha)}^{p+2} \abs{u (t, y,\beta)}^2  \, dz } \, dt \lesssim \norm{u_0}_{H_{x, \alpha}^2 (\R^d \times \T)}^4 .
\end{align*}
By H\"older inequality, we write 
\begin{align*}
\parenthese{ \int_{Q^d (x_0,r) \times \T} \abs{u(t, x, \alpha)}^2 \, dx d \alpha}^{\frac{p+2}{2}} \leq C_r  \int_{Q^d (x_0, r) \times \T} \abs{u (t, x, \alpha)}^{p+2} \, dx d \alpha .
\end{align*}
Finally, we obtain a satisfactory  Morawetz type estimate for the fourth-order Schr\"odinger equation on waveguides 
\begin{align*}
\int_{\R} \parenthese{\sup_{x_0 \in \R^d} \iint_{Q^d(x_0 , r) \times \T} \abs{ u(t,x,\alpha)}^2 \, dx d\alpha}^{\frac{p+4}{2}} \, dt  \leq C_r \norm{u_0}_{H_{x, \alpha}^2(\R^d \times \T)}^4 .
\end{align*}
\end{proof}

\section{Decay property and Scattering}\label{sec Scattering}
In this section, we apply the interaction Morawetz estimate in Proposition \ref{Morawetz} to prove the scattering for \eqref{maineq}. There are three steps. First, we show the decay property for \eqref{maineq} based on the interaction Morawetz estimate (together with Gagliardo–Nirenberg inequality in the waveguide setting) via a contradiction argument. Second, we use the decay property to obtain a spacetime bound for \eqref{maineq}. At last, we apply the spacetime bound to show the scattering behavior. This strategy has the same spirit as the Euclidean case, i.e. proving scattering for energy subcritical NLS in the energy space. See for \cite{cazenave2003semilinear,Taobook}.
\subsection{Step 1: Proof of the decay property}
Based on the Morawetz bound, we aim to show,
\begin{equation}\label{decay}
 \lim_{t \rightarrow \infty} \|u(t,x,\alpha)\|_{L^q_{x,\alpha} (\R^d \times \T)}=0,   
\end{equation}
where $2<q \leq 2+\frac{4}{d+1}$. The Morawetz estimate reads: (for every $r>0$)
\begin{equation}
    \int_{\mathbb{R}} \parenthese{\sup_{x_0\in \mathbb{R}^d} \iint_{Q^d(x_0,r)\times \mathbb{T}^n} |u(t,x,\alpha)|^2 \, dxd\alpha}^{\frac{p+4}{2}} \, dt \lesssim \|u_0\|^4_{H^{2}_{x,\alpha} (\R^d \times \T)}.
\end{equation}
We recall the localized Gagliardo–Nirenberg inequality (see eq. (A-5) of Terracini-Tzvetkov-Visciglia \cite{terracini2014nonlinear}):
\begin{equation}
    \|v\|_{L_{x,\alpha}^{2+\frac{4}{d+n}}  (\R^d \times \T^n)}\lesssim \sup_{x\in \mathbb{R}^d} \parenthese{\|v\|^{2}_{L^2_{Q^{d}_{x}\times \mathbb{T}^n}}}^{\frac{2}{d+n+2}}\|v\|^{\frac{d+n}{d+n+2}}_{H^1(\mathbb{R}^d\times \mathbb{T}^n)}
\end{equation}
where $Q^{d}_{x}=x+[0,1]^d$ for all $x\in \mathbb{R}^d$.

Now we prove the decay property via a contradiction argument. In fact it is sufficient to show that
\begin{equation}
    \lim_{t\rightarrow \infty}\|u(t,x,\alpha)\|_{L_{x,\alpha}^{2+\frac{4}{d+1}} (\R^d \times \T)} =0,
\end{equation}
since other bounds can be obtained by interpolating with the conservation law.

Next, assume that the decay estimate does not hold. Then we deduce  to the existence of a sequence $(t_n, x_n) \in \mathbb{R}\times \mathbb{R}^d$ with $t_n \rightarrow \infty$ and $\varepsilon_0>0$
such that
\begin{equation}
    \inf_{n} \|u(t_n,x,\alpha)\|_{L^2_{Q^{d}(x_n,1)\times \mathbb{T}}} =\varepsilon_0.
\end{equation}
We get the existence of $T > 0$ such that
\begin{equation}
    \inf_{n} \parenthese{ \inf_{t\in (t_n,t_n+T)}\|u(t,x,\alpha)\|_{L^2_{Q^{d}(x_n,2)\times \mathbb{T}}} }\geq \frac{\varepsilon_0}{2}.
\end{equation}
Notice that since $t_n \rightarrow \infty$ as $n \rightarrow \infty$ then we can assume (up to a subsequence) that the
intervals $(t_n,t_n+T)$ are disjoint. In particular we have
\begin{equation}
\aligned
\sum_{n} T(\frac{\varepsilon_0}{2})^{p+4}&\leq \sum_{n}\int_{t_n}^{t_n+T} \parenthese{ \iint_{Q^d(x_n,2)\times \mathbb{T}} |u(t,x,\alpha)|^2 \, dxdy}^{\frac{p+4}{2}} \, dt\\
&\leq \int_{\mathbb{R}}\parenthese{\sup_{z\in \mathbb{R}^d} \iint_{Q^d(z,2)\times \mathbb{T}} |u(t,x,\alpha)|^2 \, dxd\alpha}^{\frac{p+4}{2}} \, dt.
\endaligned
\end{equation}
Thus we get a contradiction since the left hand side is divergent and the right hand side is bounded by the Morawetz estimate, i.e. Proposition \ref{Morawetz}.
\subsection{Step 2: Proof of the spacetime bound}
We aim to show,
\begin{equation}\label{est1}
  u \in L_t^{q_{\theta}}L_x^{r_{\theta}}H_{\alpha}^{\frac{1}{2}+\delta}(\mathbb{R}_t\times \R^d \times \T)  
\end{equation}
and
\begin{equation}\label{est2}
  \lim_{t_1,t_2\rightarrow \infty} \parenthese{ \|u\|_{L_t^{l^{'}}L_x^{m^{'}}L^2_{\alpha}(t_1<t<t_2)}+\||\nabla_x|^2(u)\|_{L_t^{l}L_x^{m}L^2_{\alpha}(t_1<t<t_2)}+\||\partial_{\alpha}|^2(u)\|_{L_t^{l}L_x^{m}L^2_{\alpha}(t_1<t<t_2)}}< \infty. 
\end{equation}

The above spacetime bounds are sufficient to show the scattering for \eqref{main}. In this step, all spacetime norms are over $\mathbb{R}_t\times \R^d \times \T$ unless indicated otherwise. For example, we denote $(\int_{t_0}^{\infty}|f(t)|^{p}dt)^{\frac{1}{p}}$ by $\|f(t)\|_{L^p_{t>t_0}}$ for any given time-dependent function $f(t)$. We note that we will apply a $H_{\alpha}^{\frac{1}{2}+\delta}$ valued version of the critical analysis of \cite{cazenave2003semilinear}.

\emph{Proof.} Using Strichartz estimates and the H\"older inequality,
\begin{equation}
\aligned
    \|u\|_{L_{t>t_0}^{q_{\theta}}L_x^{r_{\theta}}H_{\alpha}^{\frac{1}{2}+\delta}} &\lesssim \|u_0\|_{H^2 (\R^d \times \T)}+\||u|^pu\|_{L_{t>t_0}^{\tilde{q_{\theta}}^{'}}L_x^{\tilde{r_{\theta}}^{'}}H_{\alpha}^{\frac{1}{2}+\delta}} \\
    &\lesssim \|u_0\|_{H^2(\R^d \times \T)}+\|u\|^{1+p}_{L_{t>t_0}^{(1+p)\tilde{q_{\theta}}^{'}}L_x^{(1+p)\tilde{r_{\theta}}^{'}}H_{\alpha}^{\frac{1}{2}+\delta}} \\
    &\lesssim \|u_0\|_{H^2(\R^d \times \T)}+\|u\|^{(1+p)\theta}_{L_{t>t_0}^{q_{\theta}}L_x^{r_{\theta}}H_{\alpha}^{\frac{1}{2}+\delta}}\|u\|^{(1+p)(1-\theta)}_{L_{t>t_0}^{\infty}L_x^{\frac{pd}{2}}H_{\alpha}^{\frac{1}{2}+\delta}}.
\endaligned    
\end{equation}
Similar to Lemma 2.5 in \cite{TV2} (this lemma is an analysis result which does not involve the nonlinear PDE structure so we can use it directly), based on the decay property \eqref{decay}, we can further obtain
\begin{equation}\label{decay2}
 \|u\|_{L_x^{\frac{pd}{2}}H_{\alpha}^{\frac{1}{2}+\delta}}=o(1).
\end{equation}
Using the decay property \eqref{decay2}, we see for every $\epsilon>0$ there exists $t_0=t_0(\epsilon)>0$ such that
\begin{equation}
    \|u\|_{L_{t>t_0}^{q_{\theta}}L_x^{r_{\theta}}H_{\alpha}^{\frac{1}{2}+\delta}} \leq C\|u_0\|_{H^2(\R^d \times \T)}+\epsilon \|u\|_{L_{t>t_0}^{q_{\theta}}L_x^{r_{\theta}}H_{\alpha}^{\frac{1}{2}+\delta}}.
\end{equation}
We can now use the continuity argument to obtain
\begin{equation}
\|u\|_{L_{t>0}^{q_{\theta}}L_x^{r_{\theta}}H_{\alpha}^{\frac{1}{2}+\delta}}<\infty,    
\end{equation}
Similarly, we obtain $\|u\|_{L_{t<0}^{q_{\theta}}L_x^{r_{\theta}}H_{\alpha}^{\frac{1}{2}+\delta}}<\infty $.

Now we consider the second estimate. We show $\||\partial_{\alpha}|^2(u)\|_{L_t^{l}L_x^{m}L^2_{\alpha}}$, the other estimates are similar.  Using Strichartz estimate and the H\"older inequality,
\begin{equation}
\aligned
   \||\partial_{\alpha}|^2(u)\|_{L_{t>t_0}^{l}L_x^{m}L^2_{\alpha}} &\lesssim \|u_0\|_{H^2(\R^d \times \T)}+\||\partial_{\alpha}|^2(|u|^pu)\|_{L_{t>t_0}^{l^{'}}L_x^{m^{'}}L^2_{\alpha}} \\
    &\lesssim \|u_0\|_{H^2(\R^d \times \T)}+\||\partial_{\alpha}|^2(u)\|_{L_{t>t_0}^{l}L_x^{m}L^2_{\alpha}}\|u\|^p_{L_{t>t_0}^{q_{\theta}}L^{r_{\theta}}_xL_{\alpha}^{\infty}} \\
    &\lesssim \|u_0\|_{H^2(\R^d \times \T)}+\||\partial_{\alpha}|^2(u)\|_{L_{t>t_0}^{l}L_x^{m}L^2_{\alpha}}\|u\|^p_{L_{t>t_0}^{q_{\theta}}L^{r_{\theta}}_xH_{\alpha}^{\frac{1}{2}+\delta}}.
\endaligned    
\end{equation}
We conclude by choosing $t_0$ large enough and by recalling \eqref{est1}.

For 4NLS, for the sake of Strichartz estimates and Sobolev embedding, we choose the indices satisfying (see Lemma \ref{index2}), 
\begin{equation}
   s+\frac{1}{2}+\delta \leq 2, 
\end{equation}
\begin{equation}
    \frac{4}{q_{\theta}}+\frac{d}{r_{\theta}}=\frac{d}{2}-s,\frac{4}{q_{\theta}}+\frac{d}{\tilde{r}_{\theta}}+\frac{4}{\tilde{q}_{\theta}}+\frac{d}{r_{\theta}}=d,
\end{equation}
\begin{equation}
    \frac{1}{(p+1)\tilde{q_{\theta}}^{'}}=\frac{\theta}{q_{\theta}}, \frac{1}{(p+1)\tilde{r_{\theta}}^{'}}=\frac{\theta}{r_{\theta}}+\frac{2(1-\theta)}{pd},
\end{equation}
and
\begin{equation}
    \frac{4}{l}+\frac{d}{m}=\frac{d}{2},\frac{1}{m^{'}}=\frac{1}{m}+\frac{p}{r_{\theta}},\frac{1}{l^{'}}=\frac{1}{l}+\frac{p}{q_{\theta}}.
\end{equation}
\subsection{Step 3: Proof of the scattering asymptotics}
In fact by using the integral equation, it is sufficient to prove that
\begin{equation}
 \lim_{t_1,t_2\rightarrow \infty}\norm{\int_{t_1}^{t_2}e^{-is\Delta_{x,\alpha}^2}(|u|^pu) \, ds  }_{H^2_{x,\alpha}  (\R^d \times \T)}=0   
\end{equation}
Moreover, using Strichartz estimates, we only need to show,
\begin{equation}
  \lim_{t_1,t_2\rightarrow \infty}\parenthese{ \||u|^pu\|_{L_t^{l^{'}}L_x^{m^{'}}L^2_{\alpha}([t_1,t_2]\times \mathbb{R}^d \times \mathbb{T})}+\||\nabla_x|^2(|u|^pu)\|_{L_t^{l^{'}}L_x^{m^{'}}L^2_{\alpha}([t_1,t_2]\times \mathbb{R}^d \times \mathbb{T})}+\||\partial_{\alpha}|^2(|u|^pu)\|_{L_t^{l^{'}}L_x^{m^{'}}L^2_{\alpha}([t_1,t_2]\times \mathbb{R}^d \times \mathbb{T})}}=0. 
\end{equation}
Noticing the two established estimates, the above limit follows in a straightforward way. Thus we proved scattering in the energy space.

\section{Discussions regarding the higher dimensional analogue and the focusing scenario}\label{sec Discussion}
In this section, we discuss the higher dimensional analogue (4NLS on $\mathbb{R}^d\times \mathbb{T}^n$, $n=2,3$) and the focusing scenario. We will explain how they follow respectively.

\subsection{The higher dimensional analogue}
As mentioned in the introduction, we can deal with more general settings, i.e. on waveguides with more torus directions. Here are the models that we consider:
\begin{itemize}
\item
For two torus dimensions, $d\geq 5$ and $\frac{8}{d}<p<\frac{8}{d-2}$.
\begin{equation*}
    (i\partial_t+\Delta_{x,\alpha}^{2})u+|u|^pu=0,\quad u(0)= u_0(x,\alpha)\in H^2(\mathbb{R}^d\times \mathbb{T}^2) ;
\end{equation*}

\item
For three torus dimensions, $d\geq 5$ and $\frac{8}{d}<p<\frac{8}{d-1}$
\begin{equation*}
    (i\partial_t+\Delta_{x,\alpha}^{2})u+|u|^pu=0,\quad u(0)= u_0(x,\alpha)\in H^2(\mathbb{R}^d\times \mathbb{T}^3) .
\end{equation*}
\end{itemize}

Similar to \eqref{main}, we note that both of the models are of `subcritical natural'. Since the proofs are similar to the $\mathbb{R}^d\times \mathbb{T}$ case, we only mention the key propositions and skip the proofs.\vspace{3mm}

For Strichartz estimates in the setting of $\mathbb{R}^d\times \mathbb{T}^n$, one can prove more general version as follows
\begin{lemma}
We consider B-admissible Strichartz pair $(p,q)$ with $s$ regularity  ($\frac{4}{p}+\frac{d}{q}=\frac{d}{2}-s$) and $\gamma>0$,
\begin{equation}
   \|e^{it\Delta_{x,\alpha}^{2}} u_0\|_{L_t^pL_x^qH_y^{\gamma}(\mathbb{R} \times \mathbb{R}^d\times \mathbb{T}^n)} \lesssim \|  u_0\|_{H_x^sH_y^{\gamma}(\mathbb{R}^d\times \mathbb{T}^n)}
\end{equation}
and
\begin{equation}
    \norm{\int_0^{t}e^{i(t-s)\Delta_{x,\alpha}^{2}} F(s)ds}_{L_t^pL_x^qH_y^{\gamma}(\mathbb{R} \times \mathbb{R}^d\times \mathbb{T}^n)} \lesssim \| |\nabla|^s F\|_{L_{t\in I}^{a^{'}}L_x^{b^{'}}H_y^{\gamma}(\mathbb{R} \times \mathbb{R}^d\times \mathbb{T}^n)}.
\end{equation}

\end{lemma}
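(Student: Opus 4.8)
The plan is to repeat, almost verbatim, the spectral-decomposition argument used for Lemma \ref{Strichartz} (the $n=1$ case), now carrying the extra $H_y^{\gamma}$ weight through an orthogonality-plus-Minkowski step. First I would fix an $L^2(\T^n)$ orthonormal basis $\{\phi_j\}_j$ of eigenfunctions of $-\Delta_y$, with $-\Delta_y \phi_j = \mu_j \phi_j$ and $\mu_j \geq 0$, and expand $u_0 = \sum_j c_j(x)\phi_j$ and $F = \sum_j F_j(t,x)\phi_j$. Since $\Delta_{x,\alpha}^2 = (\Delta_x + \Delta_y)^2$ acts diagonally on this basis, the $j$-th mode of $e^{it\Delta_{x,\alpha}^2}u_0$ is $e^{it(\Delta_x - \mu_j)^2} c_j$, whose spatial Fourier multiplier is $e^{it(\abs{\xi}^2+\mu_j)^2}$.

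Second, I would invoke the Euclidean fourth-order Strichartz estimates of Guo-Wang \cite{GW} applied to the shifted symbol $(\abs{\xi}^2+\mu_j)^2$, exactly as in the proof of Lemma \ref{Strichartz}. The crucial point, and the one I would verify carefully, is that the resulting Strichartz constant is \emph{uniform} in the shift $\mu_j$: the dispersive decay is driven by the non-degeneracy of the Hessian of $(\abs{\xi}^2+\mu_j)^2$ in $\xi$, which is unaffected by the additive constant $\mu_j$. Thus each mode obeys
\begin{equation*}
\norm{e^{it(\abs{\xi}^2+\mu_j)^2} c_j}_{L_t^p L_x^q} \lesssim \norm{c_j}_{H_x^s},
\end{equation*}
with implicit constant independent of $j$, together with the analogous inhomogeneous bound carrying $\norm{\abs{\nabla}^s F_j}_{L_t^{a'}L_x^{b'}}$ on the right.

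Third, I would reassemble the $H_y^{\gamma}$ norm. By the spectral characterization $\norm{v}_{H_y^{\gamma}}^2 \sim \sum_j (1+\mu_j)^{\gamma} \abs{v_j}^2$ and Plancherel in $y$, the target norm is an $\ell^2_j$ sum of the weighted modes. Since $p,q \geq 2$ for B-admissible pairs, Minkowski's inequality lets me pull the $\ell^2_j$ summation inside the $L_t^p L_x^q$ norm, reducing matters to the per-mode bound above summed against the weights $(1+\mu_j)^{\gamma}$; the homogeneous estimate then follows, and the inhomogeneous one is identical, with the Christ-Kiselev lemma supplying the retarded (Duhamel) integral. The remark following Lemma \ref{Strichartz} already records that $\T^n$ may be replaced by a compact Riemannian manifold, so no change is needed on that front.

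The main obstacle is precisely the combination of the uniform-in-$j$ Strichartz constant with the legitimacy of the Minkowski exchange for the genuinely mixed norm $L_t^p L_x^q H_y^{\gamma}$: one must ensure that summing the shifted Euclidean estimates loses no factor growing in $\mu_j$, and that the $L_x^q$ integration (with $q>2$) can be interchanged with the $\ell^2_j$ summation in the favorable direction. Both are standard once the Guo-Wang estimate is known to be shift-uniform, which is why this lemma, like Lemma \ref{Strichartz}, reduces cleanly to the Euclidean theory; we therefore omit the routine details.
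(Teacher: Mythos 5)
Your proposal is correct and follows essentially the same route as the paper: the paper proves the $n=1$ version (Lemma 2.4) by expanding in eigenfunctions of $-\Delta_\alpha$, reducing each mode to the shifted Euclidean symbol $(|\xi|^2+\lambda_j^2)^2$ handled by Guo--Wang, and reassembling the $H_\alpha^\gamma$ norm, then explicitly notes that the argument extends to higher-dimensional tori and compact manifolds, which is exactly the extension you carry out. Your added attention to the uniformity of the constant in the spectral shift and the Minkowski exchange for the mixed norm fills in details the paper leaves implicit.
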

Applying Strichartz estimates, one can establish the well-posedness theory via contraction mapping method as the $\mathbb{R}^d\times \mathbb{T}$ setting (see Section \ref{sec LWP}).\vspace{3mm}

For Morawetz estimates in the setting of $\mathbb{R}^d\times \mathbb{T}^n$, the dimension of torus direction does not make any difference (as shown in Remark \ref{rmk: Mora}) and one can prove,
\begin{proposition}[Morawetz estimates on $\R^d \times \T^n$]
Let $u (t,x,\alpha) \in C(\R; H_{x,\alpha}^2)$ be a global solution to the following defocusing 4NLS posed on $\R^d \times \T^n$, with $\frac{8}{d} < p < \frac{8}{d+n-4}$, $d \geq 5$
\begin{align*}
\begin{cases}
i \partial_t u + \Delta_{x,\alpha}^2 u + \abs{u}^p u = 0 \\
u(0,x) = u_0 .
\end{cases}
\end{align*}
Then  for every $r >0$, there exists $C = C(r)$ such that
\begin{align*}
\int_{\R} \parenthese{\sup_{x_0 \in \R^d} \iint_{Q^d(x_0 , r) \times \T^n} \abs{ u(t,x,\alpha)}^2 \, dx d\alpha}^{\frac{p+4}{2}} \, dt  \leq C  \norm{u_0}_{H_{x, \alpha}^2 (\R^d \times \T^n)}^4 .
\end{align*}
\end{proposition}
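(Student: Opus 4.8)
The plan is to reproduce, essentially verbatim, the tensor-product Morawetz computation carried out in the proof of Proposition~\ref{Morawetz}, since that argument exploited only two structural features of the transverse factor: the weight $a(x,y)=\inner{x-y}$ is chosen to be \emph{independent} of the periodic coordinates, and the transverse Laplace--Beltrami operator acts in variables disjoint from the Euclidean ones. Both features survive when $\T$ is replaced by $\T^n$, which is exactly the extension anticipated in Remark~\ref{rmk: Mora}. First I would form the tensor product $w=u\otimes v$ on $(\R^d\times\T^n)\times(\R^d\times\T^n)$, which satisfies $(i\partial_t+\Delta^2)w=F(u)\otimes v+F(v)\otimes u$ with $\Delta^2=\Delta_{x,\alpha}^2+\Delta_{y,\beta}^2$, and define the Morawetz action $M_a^{\otimes_2}(t)$ exactly as in \eqref{eq M}, with $\nabla_{x,y}$ still ranging only over the Euclidean variables.

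Second, differentiating in $t$ and splitting the bi-Laplacian into its purely Euclidean part $\Delta_{x,y}^2$ and the remainder $\Delta^2-\Delta_{x,y}^2$ yields the same four groups $M_1+M_2+M_3+M_4$. The pure-Euclidean contribution $M_1+M_2$ is handled identically to the one-dimensional case: the derivative list \eqref{eq list} depends only on $d$, so the terms we wish to discard still carry the factor $-d^2-8d+45\le 0$ for $d\ge 4$, while $\Delta_x^3 a\ge 0$ for $d\ge 5$. Because the weight is blind to the torus, these sign conditions — and hence the requirement $d\ge 5$ — are untouched by raising the transverse dimension.

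Third, the only genuinely new bookkeeping lies in $M_3+M_4$, the terms carrying transverse derivatives. Now $\Delta_{x,\alpha}^2=\Delta_x^2+2\Delta_x\Delta_\alpha+\Delta_\alpha^2$ with $\Delta_\alpha=\sum_{k=1}^n\partial_{\alpha_k}^2$, so in addition to the mixed terms $\partial_{x_ix_i\alpha_k\alpha_k}$ one must also track the cross-torus terms $\partial_{\alpha_k\alpha_k\alpha_\ell\alpha_\ell}$ with $k\ne\ell$. I would organize these by summing the integration-by-parts identities for $M_{3a},M_{3b},M_{4a},M_{4b}$ over the transverse indices $k,\ell$: since $a$ depends only on $x,y$, each such term integrates by parts in precisely the same manner and collapses onto a nonpositive expression — either $\Delta_x^2 a\,\abs{\partial_{\alpha_k}w}^2$ and $\Delta_y^2 a\,\abs{\partial_{\beta_k}w}^2$ (nonpositive for $d\ge 3$ by \eqref{eq list}), or a Hessian form $-\partial_{ij}^x a\,\partial_{x_i\alpha_k}\overline{u}\,\partial_{x_j\alpha_k}u$ that inherits the negativity already verified for $n=1$. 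Summing over $k,\ell$ preserves the sign, so $M_3+M_4\le 0$ again.

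Finally, with $\partial_t M_a^{\otimes_2}(t)$ bounded above by the single nonlinear Poisson-bracket term, I would integrate in time, bound $\sup_t\abs{M_a^{\otimes_2}(t)}\lesssim\norm{u_0}_{H^1_{x,\alpha}}^4\lesssim\norm{u_0}_{H^2_{x,\alpha}}^4$, integrate the bracket by parts to extract the positive factor $1-\frac{2}{p+2}$ together with the positive weight $\Delta_x a$, set $u=v$, and localize to cubes via Hölder exactly as in the passage following \eqref{eq MM}, now with $dx\,d\alpha$ the measure on $\R^d\times\T^n$. The main obstacle is purely combinatorial: confirming that every cross-torus fourth-order term $\partial_{\alpha_k\alpha_k\alpha_\ell\alpha_\ell}$ and every mixed Euclidean--torus term reduces, after integration by parts, to an expression of the correct sign. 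Once the $n=1$ computation is re-read with each transverse derivative promoted to a sum over $k$, no new cancellation or adverse sign appears, so I expect this step to be routine though notation-heavy rather than genuinely difficult.
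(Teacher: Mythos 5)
Your proposal is correct and takes essentially the same route as the paper: the paper proves the $n=1$ case in Proposition \ref{Morawetz} and then, invoking Remark \ref{rmk: Mora} (the weight $a(x,y)=\inner{x-y}$ never sees the transverse variables), asserts that raising the torus dimension changes nothing and skips the computation. Your extra bookkeeping — checking that the cross-torus terms $\partial_{\alpha_k\alpha_k\alpha_\ell\alpha_\ell}$ ($k\neq\ell$) cancel between $M_3$ and $M_4$ just as the $\partial_{\alpha\alpha\alpha\alpha}$ terms do, and that the mixed Euclidean--torus terms keep their favorable signs after summing over $k$ — is precisely the verification the paper leaves implicit.
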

Following the scheme of the $\mathbb{R}^d\times \mathbb{T}$ case (see Section 5), we can obtain the decay property then the scattering behavior. (See also the Appendix for the decay property of 4NLS on general waveguide manifolds.)\vspace{3mm}

At last, we \emph{emphasize} that using the methods in this paper we can at most deal with the case of 3-dimensional torus component. If one considers 4NLS on $\mathbb{R}^d \times \mathbb{T}^4$, the `double subcritical nature' will break up thus all subcritical techniques fail. We also note that, if one 4NLS on $\mathbb{R}^d \times \mathbb{T}^n$ ($n\geq 5$), scattering behavior will not be expected though one may still consider proving the global well-posedness. As a comparison, heuristically, 4NLS on $\mathbb{R}^d\times \mathbb{T}^n$ ($n=1,2,3$) is like NLS on $\mathbb{R}^d\times \mathbb{T}$; 4NLS on $\mathbb{R}^d\times \mathbb{T}^4$ is like NLS on $\mathbb{R}^d\times \mathbb{T}^2$; and 4NLS on $\mathbb{R}^d\times \mathbb{T}^n$ ($n\geq 5$) is like NLS on $\mathbb{R}^d\times \mathbb{T}^n$ ($n \geq 2$).
\subsection{The focusing scenario}
We can also prove global well-posedness for the focusing scenario (mass-subcritical case). However, to prove global well-posedness and scattering for general settings (at least mass critical), more ingredients are required (threshold assumptions are heuristically needed), see Dodson \cite{dodson2019global}, Killip-Visan \cite{killip2010focusing}, Duyckaerts-Holmer-Roudenko \cite{DHR} and Kenig-Merle \cite{kenig2006global} for examples.

We consider the following model,
\begin{equation}\label{focmaineq}
    (i\partial_t+\Delta_{x,\alpha}^{2})u-|u|^pu=0,\quad u(0)= u_0(x,\alpha)\in H^2(\mathbb{R}^d\times \mathbb{T}^n),
\end{equation}
with $(t,x,\alpha)\in \mathbb{R}_t \times \mathbb{R}^d \times \mathbb{T}^{n}$, where $d\geq 5$ and $0<p<\frac{8}{d}$.\vspace{3mm}

We note that $0<p<\frac{8}{d}$ indicates the mass critical case and we do not constrain the dimension of the torus component.\vspace{3mm}

As for the local theory of \eqref{focmaineq}, the focusing scenario has no differences from the defocusing one. From the local to the global, it suffices to show the $H^2$-norm does not blow up.

For the focusing case, we consider the following conserved quantity which combines both of the mass and energy,
\begin{equation}
\int_{\mathbb{R}^d\times \mathbb{T}^{n}} |u(t,x,\alpha)|^2\, d x d\alpha+\int_{\mathbb{R}^d\times \mathbb{T}^{n}} \frac12 |\Delta_{x,\alpha} u(t,x,\alpha)|^2  - \frac{1}{p+2} |u(t,x,\alpha)|^{p+2} \, dx d\alpha.   
\end{equation}
By the Gagliardo–Nirenberg inequality we deduce
\begin{equation}
    \aligned
    & \int_{\mathbb{R}^d\times \mathbb{T}^{n}} |u_0|^2\, d x d\alpha+\int_{\mathbb{R}^d\times \mathbb{T}^{n}} \frac12 |\Delta_{x,\alpha} u(0,x,\alpha)|^2  - \frac{1}{p+2} |u(0,x,\alpha)|^{p+2} \, dx d\alpha \\
    &= \int_{\mathbb{R}^d\times \mathbb{T}^{n}} |u(t,x,\alpha)|^2\, d x d\alpha+\int_{\mathbb{R}^d\times \mathbb{T}^{n}} \frac12 |\Delta_{x,\alpha} u(t,x,\alpha)|^2  - \frac{1}{p+2} |u(t,x,\alpha)|^{p+2} \, dx d\alpha \\
    &\geq \frac{1}{2}\|u(t)\|_{H^2(\mathbb{R}^d\times \mathbb{T}^{n})}-C\|u_0\|^{2+p-\theta}_{L^2_{x,\alpha}(\mathbb{R}^d\times \mathbb{T}^{n})}\|u(t)\|^{\theta}_{H^1(\mathbb{R}^d\times \mathbb{T}^{n})} \\
    &\geq  \frac{1}{2}\|u(t)\|_{H^2(\mathbb{R}^d\times \mathbb{T}^{n})}-C\|u_0\|^{2+p-\theta}_{L^2_{x,\alpha}(\mathbb{R}^d\times \mathbb{T}^{n})}\|u(t)\|^{\theta}_{H^2(\mathbb{R}^d\times \mathbb{T}^{n})},  \\
    \endaligned
\end{equation}
where $\theta \in (0,2)$ since the problem is mass-subcritical. It implies that the $H^2$-norm of the solution cannot blow up in finite time.

\section{Further remarks}\label{sec Remarks}
In this section, we make a few more remarks on this research line, i.e. \emph{long time dynamics for dispersive equations on waveguide manifolds}. As mentioned in the introduction, this area has been developed a lot in recent decades. Though many theories/tools/results have been established, there are still many interesting open questions left. We list some interesting related problems in this line for interested readers.\vspace{3mm}

1. \emph{The critical regime.} The cases we are considering in this paper are of `double subcritical' nature (see \eqref{maineq}, \eqref{maineq2}). In fact, it is also quite interesting to consider the scattering theory for the critical regime. For example, 
\begin{equation}
    (i\partial_t+\Delta_{x,\alpha}^{2})u+|u|^{\frac{8}{d}}u=0,\quad u(0)= u_0(x,\alpha)\in H^2(\mathbb{R}^d\times \mathbb{T}),
\end{equation}
and
\begin{equation}
    (i\partial_t+\Delta_{x,\alpha}^{2})u+|u|^{\frac{8}{d-3}}u=0,\quad u(0)= u_0(x,\alpha)\in H^2(\mathbb{R}^d\times \mathbb{T}),
\end{equation}
The first one is of mass-critical nature and the second one is of energy critical nature. New techniques are needed including function spaces, profile decomposition, profile approximations and even resonant systems. See \cite{CGZ,HP,Z1} for the NLS case.\vspace{3mm}

2. \emph{More general waveguide settings.} As mentioned in the introduction, due to the technical restrains, we can at most do $\mathbb{R}^d\times \mathbb{T}^3$. It is interesting to consider the general case $\mathbb{R}^d\times \mathbb{T}^n$. For more than four dimensions of tori, scattering is not expected but one may still study the long time behavior, global well-posedness for example. \vspace{3mm}

3. \emph{Scattering for focusing NLS/4NLS on waveguide manifolds.} For the focusing scenario, there is no scattering results for NLS/4NLS on waveguide manifolds, to the best acknowledgement of the authors. New ingredients are needed to deal with this type of problems and threshold assumptions are necessary. See \cite{YYZ} for a recent global well-posedness result. And see \cite{dodson2019global,DHR,kenig2006global,killip2010focusing} for the Euclidean result. \vspace{3mm}

4. \emph{Critical NLS on higher dimensional waveguide manifolds.} For critical NLS on waveguide manifolds, most of the models are lower dimensional (with no bigger than four whole dimensions), which leads quintic or cubic nonlinearity. This gives one advantages to apply function spaces to deal with the nonlinearity. In general, the difficulty of the critical NLS problem on $\mathbb{R}^m \times \mathbb{T}^n$ increases if the dimension $m+n$ is increased or if the number $m$ of copies of $\mathbb{R}$ is decreased (It is concluded in \cite{IPRT3}). There is no large data global results for critical NLS on waveguide manifolds with at least $5$ whole dimensions, to the best acknowledgement of the authors. \vspace{3mm}

5. \emph{NLS on other product spaces.} Instead of waveguide manifolds, one may consider dispersive equations on other types of product spaces. For example, $\mathbb{R}^d\times \mathbb{S}^n$ where $\mathbb{S}^n$ are n-dimensional spheres ($\mathbb{S}^n$ can be replaced by other manifolds). See \cite{pausader2014global} for a Global well-posedness result of NLS on pure spheres. NLS may be a good model to start with.\vspace{3mm}

\appendix
\section{Decay property for NLS and 4NLS on waveguides}\label{sec Apx}
In this appendix, we include the decay property for NLS and 4NLS on general waveguide manifolds $\mathbb{R}^d\times \mathbb{T}^n$ via the interaction Morawetz estimate and the contradiction argument, which may have their own interests. They are mainly motivated by \cite{terracini2014nonlinear,TV2,visciglia2008decay}. See also \cite{visciglia2008decay} for the Euclidean setting. 

As a comparison, we include the decay property results in the Euclidean setting below. 

\begin{proposition}[\cite{visciglia2008decay}]
Let $u(t,x) \in C(\mathbb{R};H^1(\mathbb{R}^d))$ be the unique global solution to
\begin{equation}
 (i\partial_t+\Delta_x)u=|u|^p u, \quad u(0)=u_0 (x) \in H^1(\mathbb{R}^d)  
\end{equation}
where $0<p<\frac{4}{d-2}$. Then for every $2<r<\frac{2d}{d-2}$ when $d=3$ and for every $2<r<\infty$ when $d=1,2$, we have:
\begin{equation}
    \lim_{t \rightarrow \pm \infty}\|u(t,x)\|_{L_{x}^{r}(\mathbb{R}^d)}=0.
\end{equation}
Moreover in the case $d=1$ we also have:
\begin{equation}
  \lim_{t \rightarrow \pm \infty}\|u(t,x)\|_{L_{x}^{\infty}(\mathbb{R}^d)}=0.   
\end{equation}
\end{proposition}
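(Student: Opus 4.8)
The plan is to run the Euclidean analogue of the contradiction argument in Section \ref{sec Scattering}, Step~1, with the interaction Morawetz estimate for defocusing NLS playing the role of Proposition \ref{Morawetz}. The defocusing structure enters in two places: conservation of mass and energy gives a uniform bound $\sup_{t}\|u(t)\|_{H^1(\R^d)}\le C(\|u_0\|_{H^1})$ (which is what makes the right-hand sides below finite and the endpoint interpolations work), and the sign of the nonlinearity furnishes the positivity needed in the Morawetz computation. I would treat the Morawetz inequality itself as the essential input, exactly as the paper treats its 4NLS counterpart.

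First I would record the localized interaction Morawetz estimate. With the classical weight $a(x-y)=|x-y|$, the time derivative of the interaction Morawetz action $M(t)=\int\!\int \nabla a(x-y)\cdot\im[\bar u\nabla u](t,x)\,|u(t,y)|^2\,dx\,dy$ controls the nonlinear term $\int\!\int \Delta a\,|u(t,x)|^{p+2}|u(t,y)|^2$, while $|M(t)|\lesssim\|u_0\|_{H^1}^4$; localizing both tensor factors to a common cube and applying Hölder exactly as in the proof of Proposition \ref{Morawetz} yields
\[
\int_{\R}\left(\sup_{x_0\in\R^d}\int_{Q^d(x_0,r)}|u(t,x)|^2\,dx\right)^{\frac{p+4}{2}}dt\le C(r)\,\|u_0\|_{H^1(\R^d)}^4 .
\]
The decisive feature is that the exponent $\tfrac{p+4}{2}$ is strictly larger than $1$.

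Next I would reduce the conclusion to a single exponent. It suffices to prove $\lim_{t\to\pm\infty}\|u(t)\|_{L^{2+4/d}(\R^d)}=0$, because the remaining range $2<r<\tfrac{2d}{d-2}$ (resp. $2<r<\infty$ for $d=1,2$) follows by interpolating this exponent against the uniform bounds $\|u(t)\|_{L^2}\lesssim\|u_0\|_{L^2}$ (mass) and $\|u(t)\|_{L^{2d/(d-2)}}\lesssim\|u(t)\|_{\dot H^1}\lesssim 1$ (Sobolev), with $H^1(\R^d)\hookrightarrow L^q$ for all finite $q$ supplying the upper anchor when $d=1,2$; the endpoint $r=\infty$ in $d=1$ comes from the one-dimensional inequality $\|u\|_{L^\infty}\lesssim\|u\|_{L^r}^{r/(r+2)}\|\partial_x u\|_{L^2}^{2/(r+2)}$. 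To obtain the $L^{2+4/d}$ decay I invoke the localized Gagliardo–Nirenberg inequality (the $n=0$ case of eq.~(A-5) in \cite{terracini2014nonlinear}),
\[
\|u(t)\|_{L^{2+4/d}(\R^d)}\lesssim\left(\sup_{x_0}\|u(t)\|_{L^2(Q^d(x_0,1))}^2\right)^{\frac{2}{d+2}}\|u(t)\|_{H^1}^{\frac{d}{d+2}},
\]
so, given the uniform $H^1$ bound, it is enough to show $\sup_{x_0}\|u(t)\|_{L^2(Q^d(x_0,1))}\to0$.

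Finally I would prove this local-mass decay by contradiction, as in Section \ref{sec Scattering}. If it fails, there exist $\varepsilon_0>0$, $t_n\to\infty$ and $x_n$ with $\|u(t_n)\|^2_{L^2(Q^d(x_n,1))}\ge\varepsilon_0$. The technical heart is a uniform-in-$n$ time-stability of the local mass: from the local conservation law $\partial_t|u|^2=-\nabla\cdot\big(2\,\im(\bar u\nabla u)\big)$ (the nonlinearity contributes nothing to the mass current), one gets for a fixed cutoff $\phi$ adapted to $Q^d(x_n,1)\subset Q^d(x_n,2)$ a Lipschitz bound $\big|\tfrac{d}{dt}\int\phi|u|^2\big|\lesssim\|\nabla\phi\|_\infty\|u_0\|_{H^1}^2$ with constant independent of $n$, so there is a fixed $T>0$ with $\|u(t)\|^2_{L^2(Q^d(x_n,2))}\ge\varepsilon_0/2$ for $t\in(t_n,t_n+T)$. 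Passing to a subsequence so that these intervals are disjoint and feeding them into the Morawetz bound gives $\sum_n T(\varepsilon_0/2)^{(p+4)/2}\le C<\infty$, which is impossible. I expect the genuine obstacle to lie in the interaction Morawetz estimate in the low dimensions $d=1,2$: there the positivity exploited for $d\ge3$ degenerates, and one must instead appeal to a bilinear-virial or tensor-product interaction Morawetz (as in \cite{PV,CGT}) to secure the spacetime bound with exponent exceeding $1$; once that input is available, the Gagliardo–Nirenberg reduction, the interpolation, and the contradiction packaging are routine.
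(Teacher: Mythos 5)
Your proposal is correct and follows essentially the same route the paper itself takes: the paper does not reprove this Euclidean proposition (it is quoted from \cite{visciglia2008decay}), but its Appendix establishes the waveguide analogues by exactly your scheme — an interaction Morawetz bound with exponent $\frac{p+4}{2}$, the localized Gagliardo--Nirenberg inequality of \cite{terracini2014nonlinear}, and the disjoint-intervals contradiction argument. Your additional observations (the explicit local-mass stability obtained from the cutoff and the local conservation law, and the reliance on \cite{PV,CGT} for the $d=1,2$ interaction Morawetz input, where the $d\geq 3$ positivity degenerates) are accurate and fill in precisely the two points the paper leaves implicit.
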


The results for the waveguide case are as follows. We discuss the 4NLS case and the NLS case respectively.\vspace{3mm}

\subsection{Decay property for 4NLS on waveguides}

We consider the 4NLS on waveguides $\mathbb{R}^d\times \mathbb{T}^n$ ($d\geq 5, n \geq 1$) as follows.
\begin{equation}
    (i\partial_t+\Delta_{x,\alpha}^{2})u+|u|^pu=0,\quad u(0)= u_0(x,\alpha)\in H^2(\mathbb{R}^d\times \mathbb{T}^n).
\end{equation}
We will show 
\begin{equation}
 \lim_{t \rightarrow \infty} \|u(t,x,\alpha)\|_{L^q_{x,\alpha} (\R^d \times \T^n)}=0,   
\end{equation}
where $2<q \leq 2+\frac{4}{d+n}$.

We recall the Morawetz estimate (see Proposition \ref{Morawetz} and Remark \ref{rmk: Mora}),
\begin{equation}
    \int_{\mathbb{R}} \parenthese{ \sup_{x_0\in \mathbb{R}^d} \iint_{Q^d(x_0,r)\times \mathbb{T}^n} |u(t,x,\alpha)|^2 \, dxd\alpha}^{\frac{p+4}{2}} \, dt \lesssim \|u_0\|^4_{H^{2}_{x,\alpha} (\R^d \times \T^n)}.
\end{equation}
We then recall the localized Gagliardo–Nirenberg inequality,
\begin{equation}
    \|v\|_{L_{x,\alpha}^{2+\frac{4}{d+n}} (\R^d \times \T^n)}\lesssim \sup_{x\in \mathbb{R}^d} \parenthese{ \|v\|^{2}_{L^2_{Q^{d}_{x}\times \mathbb{T}^n}}}^{\frac{2}{d+n+2}}\|v\|^{\frac{d+n}{d+n+2}}_{H^1(\mathbb{R}^d\times \mathbb{T}^n)}
\end{equation}
where $Q^{d}_{x}=x+[0,1]^d$ for all $x\in \mathbb{R}^d$.

Now we prove the decay property via a contradiction argument. Obviously it is sufficient to show that
\begin{equation}
    \lim_{t\rightarrow \infty}\|u(t,x,\alpha)\|_{L_{x,\alpha}^{2+\frac{4}{d+n}} (\R^d \times \T^n)} =0,
\end{equation}
since other bounds can be obtained via interpolation with the conservation law.

Next, assume the decay estimate does not hold. Then we deduce the existence of a sequence $(t_n, x_n) \in \mathbb{R}\times \mathbb{R}^d$ with $t_n \rightarrow \infty$ and $\varepsilon_0>0$
such that
\begin{equation}
    \inf_{n} \|u(t_n,x,\alpha)\|_{L^2_{Q^{d}(x_n,1)\times \mathbb{T}^n}} =\varepsilon_0.
\end{equation}
We get the existence of $T > 0$ such that
\begin{equation}
    \inf_{n} \parenthese{ \inf_{t\in (t_n,t_n+T)}\|u(t,x,\alpha)\|_{L^2_{Q^{d}(x_n,2)\times \mathbb{T}^n}} } \geq \frac{\varepsilon_0}{2}.
\end{equation}
Notice that since $t_n \rightarrow \infty$ then we can assume (up to a subsequence) that the
intervals $(t_n,t_n+T)$ are disjoint. In particular we have
\begin{equation}
\aligned
\sum_{n} T(\frac{\varepsilon_0}{2})^{p+4}&\leq \sum_{n}\int_{t_n}^{t_n+T} \parenthese{  \iint_{Q^d(x_n,2)\times \mathbb{T}^n} |u(t,x,\alpha)|^2\, dxd\alpha }^{\frac{p+4}{2}}\, dt\\
&\leq \int_{\mathbb{R}} \parenthese{ \sup_{z\in \mathbb{R}^d} \iint_{Q^d(z,2)\times \mathbb{T}^n} |u(t,x,\alpha)|^2 \, dxd\alpha }^{\frac{p+4}{2}} \, dt
\endaligned
\end{equation}
and hence we get a contradiction since the left hand side is divergent and the right hand side is bounded by the Morawetz estimate.
\subsection{Decay property for NLS on waveguides}
We consider NLS on waveguides $\mathbb{R}^d\times \mathbb{T}^n$ ($d, n \geq 1$) as follows.
\begin{equation}
    (i\partial_t+\Delta_{x,\alpha})u-|u|^pu=0,\quad u(0)= u_0(x,\alpha)\in H^1(\mathbb{R}^d\times \mathbb{T}^n).
\end{equation}
We will show 
\begin{equation}
 \lim_{t \rightarrow \infty} \|u(t,x,\alpha)\|_{L^q_{x,\alpha} (\R^d \times \T^n)}=0,   
\end{equation}
where $2<q \leq 2+\frac{4}{d+n}$.

We recall the Morawetz estimate (see Proposition \ref{Morawetz} and Remark \ref{rmk: Mora}),
\begin{equation}
    \int_{\mathbb{R}} \parenthese{ \sup_{x_0\in \mathbb{R}^d} \iint_{Q^d(x_0,r)\times \mathbb{T}^n} |u(t,x,\alpha)|^2 \, dxd\alpha}^{\frac{p+4}{2}} \, dt \lesssim \|u_0\|^4_{H^{1}_{x,\alpha} (\R^d \times \T^n)}.
\end{equation}
Recalling the localized Gagliardo–Nirenberg inequality,
\begin{equation}
    \|v\|_{L_{x,\alpha}^{2+\frac{4}{d+n}} (\R^d \times \T^n)}\lesssim \sup_{x\in \mathbb{R}^d} \parenthese{\|v\|^{2}_{L^2_{Q^{d}_{x}\times \mathbb{T}^n}}}^{\frac{2}{d+n+2}}\|v\|^{\frac{d+n}{d+n+2}}_{H^1(\mathbb{R}^d\times \mathbb{T}^n)}
\end{equation}
where $Q^{d}_{x}=x+[0,1]^d$ for all $x\in \mathbb{R}^d$.

Now we prove the decay property via a contradiction argument. Obviously it is sufficient to show that
\begin{equation}
    \lim_{t\rightarrow \infty}\|u(t,x,\alpha)\|_{L_{x,\alpha}^{2+\frac{4}{d+n}} (\R^d \times \T^n)} =0,
\end{equation}
since other bounds can be obtained via interpolation with the conservation law.

Next, assume the decay estimate does not hold. Then we deduce the existence of a sequence $(t_n, x_n) \in \mathbb{R}\times \mathbb{R}^d$ with $t_n \rightarrow \infty$ and $\varepsilon_0>0$
such that
\begin{equation}
    \inf_{n} \|u(t_n,x,\alpha)\|_{L^2_{Q^{d}(x_n,1)\times \mathbb{T}^n}} =\varepsilon_0.
\end{equation}
We get the existence of $T > 0$ such that
\begin{equation}
    \inf_{n} \parenthese{ \inf_{t\in (t_n,t_n+T)}\|u(t,x,\alpha)\|_{L^2_{Q^{d}(x_n,2)\times \mathbb{T}^n}} } \geq \frac{\varepsilon_0}{2}.
\end{equation}
Notice that since $t_n \rightarrow \infty$ then we can assume (up to a  subsequence) that the
intervals $(t_n,t_n+T)$ are disjoint. In particular we have
\begin{equation}
\aligned
\sum_{n} T(\frac{\varepsilon_0}{2})^{p+4}&\leq \sum_{n}\int_{t_n}^{t_n+T} \parenthese{ \iint_{Q^d(x_n,2)\times \mathbb{T}^n} |u(t,x,\alpha)|^2 \, dxd\alpha}^{\frac{p+4}{2}} \, dt\\
&\leq \int_{\mathbb{R}} \parenthese{ \sup_{z\in \mathbb{R}^d} \iint_{Q^d(z,2)\times \mathbb{T}^n} |u(t,x,\alpha)|^2 \, dxd\alpha}^{\frac{p+4}{2}} \, dt
\endaligned
\end{equation}
and hence we get a contradiction since the left hand side is divergent and the right hand side is bounded by the Morawetz estimate.\vspace{3mm}

At last, we conclude this paper by mentioning two more remarks. We discuss the NLS case here and similar statements can be made for  4NLS.

\begin{remark}
As we can see from the discussions above, for the $\mathbb{R}^d \times \mathbb{T}^n$ waveguide case, one can obtain decay for $L^q_{x,\alpha}$-norm where $q$ is at most $2+\frac{4}{d+n}$. It is interesting to think if $2+\frac{4}{d+n}$ is the maximal index. If not, can one obtain larger range?
\end{remark}

\begin{remark}
One may also consider the pointwise type decay which describes the decay rate of nonlinear solutions quantitatively. Heuristically, for NLS on waveguides $\mathbb{R}^d \times \mathbb{T}^n$ with proper nonlinearity (in the sense that the scattering holds) and nice initial space, the nonlinear solution in $L^{\infty}$-norm decays as $t^{-\frac{d}{2}}$, which is consistent with the Euclidean results. See \cite{fan2021decay} for a recent result and the references therein.
\end{remark}
\noindent \textbf{Acknowledgments.} X. Yu was funded in part by an AMS-Simons travel grant. H. Yue was supported by a start-up funding of ShanghaiTech University. Z. Zhao was supported by UMD’s postdoc support, NSFC-12101046 and the Beijing Institute of Technology Research Fund Program for Young Scholars. Part of this work was done while the first two authors were in residence at the Institute for Computational and Experimental Research in Mathematics in Providence, RI, during the {\it Hamiltonian Methods in Dispersive and Wave Evolution Equations} program. Some of the work was done while the third author was moving from the University of Maryland to the Beijing Institute of Technology, so he appreciates the kind supports of both institutes. \vspace{5mm}

\bibliography{4NLS}
\bibliographystyle{plain}
\end{document}